\newcommand{\sh}{\mathcyr {sh}}
\newcommand{\I}{{\mathcal I}}
\newcommand{\ZZ}{{\mathcal Z}}
\newcommand{\CC}{{\mathcal C}}
\newcommand{\QQ}{{\mathcal Q}}
\newcommand{\Z}{{\mathbb Z}}
\newcommand{\even}{{\mathbf{e}}}
 \newcommand{\odd}{{\mathbf{o}}}
\newcommand{\C}{{\mathbb C}}
\newcommand{\Q}{{\mathbb Q}}
\newcommand{\R}{\mathbb R}
\newcommand{\DE}{{\mathcal{DE}}_k}
\newcommand{\ve}{\varepsilon}
\newcommand{\SL}{{\rm SL}_2 ({\mathbb Z})}
\newcommand{\PSL}{{\rm PSL}_2 ({\mathbb Z})}
\newcommand{\cz}{\widetilde{\zeta}}
\newcommand{\tr}[1]{{}^t\hspace{0mm}#1}
\newcommand{\PGL}{{\rm PGL}}
\newcommand{\itint}{\mathop {\int \cdots \int}_{1>t_1>t_2>\cdots>t_{r+s}>0} }
\newtheorem*{thm1}{Theorem 1}
\newtheorem*{thm2}{Theorem 2}
\newtheorem*{thm3}{Theorem 3}
\newtheorem*{thm4}{Theorem 4}
\newtheorem*{thm5}{Theorem 5}
\newtheorem*{prop1}{Proposition 1}
\newtheorem*{lem}{Lemma}
\newtheorem*{lem1}{Lemma 1}
\newtheorem*{lem2}{Lemma 2}
\newtheorem*{cor}{Corollary}
\newtheorem*{dfn}{Definition 1}
\title{Double zeta values, double Eisenstein series, and modular forms of level $2$}
\author{Masanobu Kaneko and Koji Tasaka}
\date{}
\begin{document}
\maketitle

\begin{abstract} 
We study the double shuffle relations satisfied by the double zeta values of level 2, and
introduce the double Eisenstein series of level 2 which satisfy the double shuffle relations.
We connect the double Eisenstein series to modular forms of level 2.
\end{abstract}

\section{Introduction}

In \cite{gkz}, H.~Gangl, D.~Zagier and the first author studied in detail the ``double shuffle relations'' satisfied by the  double zeta values
\begin{equation}\label{e_1_1}
\zeta(r,s) = \sum_{m>n>0 } \frac{1}{m^r n^s} \quad (r\ge2, s\ge1), 
\end{equation}
and revealed in particular various connections between the space of double zeta values and the space of modular forms 
as well as their period polynomials on the full modular group $\PSL$.  They also defined  the 
``double Eisenstein series'' and deduced the double shuffle relations for them, and in \cite{kaneko} we illustrated a direct way to connect the double Eisenstein series to the period 
polynomials of modular forms (of level 1).  

In the present paper, we consider the double shuffle relations of level $2$ and study the formal double zeta space, 
whose generators are the formal symbols corresponding to the double zeta values of level 2 (Euler sums) and the
defining relations are the double shuffle relations. One of the relations we obtain in the formal double zeta space
(Theorem~1) has an interesting
application to the problem of representations of integers as sums of squares, and this will be given in the subsequent paper by the second author \cite{t}. We then proceed to
define the double Eisenstein series of level 2 and show that they also satisfy the double shuffle relations (Theorem~3),
and have connections like in the case of level 1 to double zeta values, modular forms, and period polynomials,  of level 2
(Theorem~5 and its corollary).

\section*{Acknowledgments}
This work is partially supported by Japan Society for the Promotion of Science, Grant-in-Aid for Scientific Research (S) 19104002, (B) 23340010ÅD

\section{The double zeta values of level $2$}

The double zeta values of level $2$ we are referring to are the following four types of real numbers given for integers $r\geq2$ and $s\geq1$:
\begin{equation*}
\begin{aligned}
\zeta^{\even\even} (r,s) = \sum_{\begin{subarray}{c} m>n>0 \\ m,\, n:\, {\rm even} \end{subarray}} \frac{1}{m^rn^s}, & \quad 
\zeta^{\even\odd} (r,s) = \sum_{\begin{subarray}{c} m>n>0 \\ m:\, {\rm even},\, n:\, {\rm odd} \end{subarray}} \frac{1}{m^rn^s},  \\
\zeta^{\odd\even} (r,s) = \sum_{\begin{subarray}{c} m>n>0 \\ m:\, {\rm odd},\, n:\, {\rm even} \end{subarray}} \frac{1}{m^rn^s}, &\quad 
\zeta^{\odd\odd} (r,s) = \sum_{\begin{subarray}{c} m>n>0 \\ m,\, n:\, {\rm odd} \end{subarray}} \frac{1}{m^rn^s}.
\end{aligned}
\end{equation*}
These numbers can be written as simple linear combinations of the original multiple zeta values \eqref{e_1_1} and the numbers often referred to as Euler sums defined by
\[ \zeta(r,\overline{s}) = \sum_{m>n>0 } \frac{(-1)^n}{m^r n^s}, \ \zeta(\overline{r},s) = \sum_{m>n>0 } \frac{(-1)^m}{m^r n^s}, \ \zeta(\overline{r},\overline{s}) = \sum_{m>n>0 } \frac{(-1)^{m+n}}{m^r n^s}, \]
and vice versa. We have for instance 
\[ \zeta^{\odd\odd} (r,s) =\frac14\left(\zeta(r,s)-\zeta(\overline{r},s) -\zeta(r,\overline{s})+\zeta(\overline{r},\overline{s}) 
\right) \]
and similarly for other values.
Note that, from the obvious relation 
\[ \zeta(r,s) = \zeta^{\even\even} (r,s) +\zeta^{\even\odd} (r,s) +\zeta^{\odd\even} (r,s)+\zeta^{\odd\odd} (r,s)\]
and 
\[ \zeta(r,s)=2^{r+s}  \zeta^{\even\even} (r,s),\]
we have the relation
\[ (2^{r+s}-1)\zeta^{\even\even} (r,s)=\zeta^{\even\odd} (r,s) +\zeta^{\odd\even} (r,s)+\zeta^{\odd\odd} (r,s).\]
We shall hereafter only consider $\zeta^{\even\odd} (r,s), \zeta^{\odd\even} (r,s)$, and $\zeta^{\odd\odd} (r,s).$
Moreover define 
\[ \zeta^{\even} (k) = \sum_{n>0,\, {\rm even}} \frac{1}{n^k}\quad \text{and}\quad 
\zeta^{\odd} (k) = \sum_{n>0,\, {\rm odd}} \frac{1}{n^k}.\] 
Then in the standard manner we can show the following double shuffle relations.
\begin{prop1}  For positive integers $r, s\ge2$, we have  
\begin{align*}
\zeta^{\odd} (r) \zeta^{\even} (s) &= \zeta^{\odd\even} (r,s) + \zeta^{\even\odd} (s,r)= \sum_{\substack{i+j=r+s\\ i\ge2,\,j\ge1}}\left(\binom{i-1}{r-1} \zeta^{\odd\even} (i,j) + 
\binom{i-1}{s-1} \zeta^{\odd\odd} (i,j)\right), \\
\zeta^{\odd} (r) \zeta^{\odd} (s) &= \zeta^{\odd\odd} (r,s)+\zeta^{\odd\odd} (s,r)+ \zeta^{\odd} (r+s)
=\sum_{\substack{i+j=r+s\\ i\ge2,\,j\ge1}} 
\left( \binom{i-1}{r-1} + \binom{i-1}{s-1} \right) \zeta^{\even\odd} (i,j). \\
\end{align*}
\end{prop1}

\begin{proof} The first equality in each sequence of identities is obtained as usual from the manipulation of the defining series.  For the second, we use the following integral
representations of each zeta value and the shuffle product of integrals:
\begin{align*}
\zeta^{\odd} (k)& = {\mathop {\int \cdots \int}_{1>t_1>t_2>\cdots>t_{k}>0} } \frac{d  t_1}{t_1} \cdot \frac{d  t_2}{t_2} \cdots  \frac{d  t_{k-1}}{t_{k-1}} \cdot \frac{d  t_k}{1-t_k^2},\\
\zeta^{\even} (k) &= {\mathop {\int \cdots \int}_{1>t_1>t_2>\cdots>t_{k}>0} } \frac{d  t_1}{t_1} \cdot \frac{d  t_2}{t_2} \cdots  \frac{d  t_{k-1}}{t_{k-1}} \cdot \frac{t_kd  t_k}{1-t_k^2},\\
\zeta^{\even\odd} (r,s)& = \itint  \frac{d  t_1}{t_1} \cdot \frac{d  t_2}{t_2} \cdots  \frac{d  t_{r-1}}{t_{r-1}} \cdot \frac{d  t_r}{1-t_r^2}\cdot  \frac{d  t_{r+1}}{t_{r+1}} \cdots  \frac{d  t_{r+s-1}}{t_{r+s-1}} \cdot \frac{d  t_{r+s}}{1-t_{r+s}^2} ,\\
\zeta^{\odd\even} (r,s) &=\itint  \frac{d  t_1}{t_1} \cdot \frac{d  t_2}{t_2} \cdots  \frac{d  t_{r-1}}{t_{r-1}} \cdot \frac{d  t_r}{1-t_r^2}\cdot  \frac{d  t_{r+1}}{t_{r+1}} \cdots  \frac{d  t_{r+s-1}}{t_{r+s-1}} \cdot \frac{t_{r+s} d  t_{r+s}}{1-t_{r+s}^2} ,\\
\zeta^{\odd\odd} (r,s)&= \itint  \frac{d  t_1}{t_1} \cdot \frac{d  t_2}{t_2} \cdots  \frac{d  t_{r-1}}{t_{r-1}} \cdot \frac{t_r d  t_r}{1-t_r^2}\cdot  \frac{d  t_{r+1}}{t_{r+1}} \cdots  \frac{d  t_{r+s-1}}{t_{r+s-1}} \cdot \frac{d  t_{r+s}}{1-t_{r+s}^2}.
\end{align*}
The first two are easy to deduce, and to see the rest for double zetas, we use the expressions of those values in terms of the multiple $L$-values studied in \cite{ak2}:
\begin{align*}
\zeta^{\even\odd} (r,s)&=\frac14\left(L_{\sh}(r,s;0,0)-L_{\sh}(r,s;0,1)-L_{\sh}(r,s;1,0)+L_{\sh}(r,s;1,1)\right), \\
\zeta^{\odd\even} (r,s)&=\frac14\left(L_{\sh}(r,s;0,0)+L_{\sh}(r,s;0,1)-L_{\sh}(r,s;1,0)-L_{\sh}(r,s;1,1)\right), \\
\zeta^{\odd\odd} (r,s)&=\frac14\left(L_{\sh}(r,s;0,0)-L_{\sh}(r,s;0,1)+L_{\sh}(r,s;1,0)-L_{\sh}(r,s;1,1)\right).
\end{align*}
Here, the multiple $L$-value (of level 2)
\[   L_{\sh} (k_1,\ldots, k_n;a_1,\ldots,a_n):=\sum_{m_1>\cdots>m_n>0} \frac{(-1)^{(m_1-m_2)a_1 }\cdots (-1)^{(m_{n-1}-m_n)a_2}  (-1)^{m_n a_n}}{ m_1^{k_1} \cdots m_n^{k_n}} \]
is so defined that it has an integral expression similar to that for usual multiple zeta values, and in our case ($n$=2) it is given as
\begin{equation*}
L_{\sh} (r,s;a,b) = \itint \frac{dt_1}{t_1}  \cdots  \frac{dt_{r-1}}{t_{r-1}} \cdot \frac{(-1)^a dt_r}{1-(-1)^a t_r} \cdot \frac{dt_{r+1}}{t_{r+1}}  \cdots  \frac{dt_{r+s-1}}{t_{r+s-1}} \cdot \frac{(-1)^b dt_{r+s}}{1-(-1)^b t_{r+s}},
\end{equation*}
where $a,b=0$ or $1$. The value $L_{\sh} (r,s;0,0)$ is nothing but the double zeta value $\zeta(r,s)$. In terms of the Euler sums mentioned before, we have
\[L_{\sh} (r,s;0,1)= \zeta(r,\overline{s}),\  L_{\sh} (r,s;1,0)=\zeta(\overline{r},\overline{s}),\ L_{\sh} (r,s;1,1)=\zeta(\overline{r},s).\]   Noting the identities
\[ \frac1{1-t^2}=\frac12\left(\frac1{1-t}-\frac{(-1)}{1+t}\right),\quad \frac{t}{1-t^2}=\frac12\left(\frac1{1-t}+\frac{(-1)}{1+t}\right), \]
we obtain the desired integral expressions and hence the proposition by shuffle products.  
\end{proof}

Now we introduce the level 2 version of the formal double zeta space studied in \cite{gkz} as follows.
Let $k>2$ and $\mathcal{DZ}_k$ be the $\Q$-vector space spanned by formal symbols 
$Z^{\even\odd}_{r,s},Z^{\odd\even}_{r,s}$,
$Z^{\odd\odd}_{r,s}$, $P^{\odd\even}_{r,s}$, $P^{\odd\odd}_{r,s}\ ( r,s\ge1,\, r+s=k)$, and $Z^{\odd}_k$ with the 
set of relations
\begin{align}
\label{e_1_7} &\ P^{\odd\even}_{r,s} = Z^{\odd\even}_{r,s} + Z^{\even\odd}_{s,r} = \sum_{\substack{i+j=k\\
i,j\ge1}} \left(\binom{i-1}{r-1} Z^{\odd\even}_{i,j} + \binom{i-1}{s-1} Z^{\odd\odd}_{i,j}\right), \\
\label{e_1_8} &\ P^{\odd\odd}_{r,s} = Z^{\odd\odd}_{r,s}+ Z^{\odd\odd}_{s,r} +Z^{\odd}_{k} = \sum_{\substack{i+j=k\\i,j\ge1}} \left( \binom{i-1}{r-1} +\binom{i-1}{s-1} \right) Z^{\even\odd}_{i,j}
\end{align}
for $r,s\ge1,\, r+s=k$, so that 
\begin{align*}
&\mathcal{DZ}_k = \frac{ \{ \Q\mbox{-linear combinations of }Z^{\even\odd}_{r,s},Z^{\odd\even}_{r,s},
Z^{\odd\odd}_{r,s},P^{\odd\even}_{r,s},P^{\odd\odd}_{r,s}, Z^{\odd}_k  \}}{\langle \mbox{relations\ } \eqref{e_1_7},\eqref{e_1_8} \rangle }.
\end{align*}
Since the elements $P^{\odd\even}_{r,s}$ and $P^{\odd\odd}_{r,s}$ are written in $Z$'s, we can also 
regard the space as given by
\begin{align*}
&\mathcal{DZ}_k = \frac{ \{ \Q\mbox{-linear combinations of }Z^{\even\odd}_{r,s},Z^{\odd\even}_{r,s},
Z^{\odd\odd}_{r,s},Z^{\odd}_k  \}}{\langle \mbox{relations\ } \eqref{ds1},\eqref{ds2} \rangle }
\end{align*}
where the defining relations \eqref{ds1} and \eqref{ds2} are
\begin{align}
\label{ds1} &\ Z^{\odd\even}_{r,s} + Z^{\even\odd}_{s,r} = \sum_{\substack{i+j=k\\
i,j\ge1}} \left(\binom{i-1}{r-1} Z^{\odd\even}_{i,j} + \binom{i-1}{s-1} Z^{\odd\odd}_{i,j}\right), \\
\label{ds2} &\ Z^{\odd\odd}_{r,s}+ Z^{\odd\odd}_{s,r} +Z^{\odd}_{k} = \sum_{\substack{i+j=k\\i,j\ge1}} \left( \binom{i-1}{r-1} +\binom{i-1}{s-1} \right) Z^{\even\odd}_{i,j}.
\end{align}

Note that the relations \eqref{e_1_7} and \eqref{e_1_8} (as well as \eqref{ds1} and \eqref{ds2}) correspond to those in Proposition~1 when $r,s\ge2$,
under the correspondences 
\begin{align*}
&Z^{\even\odd}_{r,s}\longleftrightarrow \zeta^{\even\odd}(r,s),\ \ Z^{\odd\even}_{r,s}
\longleftrightarrow \zeta^{\odd\even}(r,s),\  \ Z^{\odd\odd}_{r,s}\longleftrightarrow \zeta^{\odd\odd}(r,s),
\ \ Z^{\odd}_k\longleftrightarrow \zeta^\odd(k),\\
&\qquad\quad P^{\odd\even}_{r,s}\longleftrightarrow \zeta^\odd(r)\zeta^\even(s),\  P^{\odd\odd}_{r,s}\longleftrightarrow \zeta^\odd(r)\zeta^\odd(s), 
\end{align*}
 because in that case the binomial coefficients for $i=1$ on the right vanishes.  For our later applications
it is convenient to allow the ``divergent'' $Z^{\even\odd}_{1,k-1},  P^{\odd\even}_{1,k-1}$ etc., and in fact 
the double shuffle relations in Proposition~1 can be extended for $r=1$ or $s=1$ by using a suitable regularization procedure
for  $L_{\sh} (1,k-1;0,1)$ etc. developed in \cite{ak2}. Specifically, by setting
\begin{equation}\label{regval}
\zeta^{\odd} (1) := \frac{1}{2} (T - L_{\sh} (1;1) ),\ \zeta^{\even} (1) := \frac{1}{2} (T +L_{\sh} (1;1) )
\end{equation}
$ (L_{\sh} (1;1)=\sum_{m=1}^\infty\frac{(-1)^m}{m}=-\log 2)$ and, for $s\ge2$
\begin{align*}
\zeta^{\even\odd} (1,s)&=\frac12\zeta^\odd(s)T+\frac12L_{\sh} (1;1)\zeta^\odd(s)-\zeta^{\odd\even}(s,1),\\
\zeta^{\odd\even} (1,s)&=\frac12\zeta^\even(s)T-\frac12L_{\sh} (1;1)\zeta^\even(s)-\zeta^{\even\odd}(s,1),\\
\zeta^{\odd\odd} (1,s)&=\frac12\zeta^\odd(s)T-\frac12L_{\sh} (1;1)\zeta^\odd(s)-\zeta^{\odd\odd}(s,1)-\zeta^\odd(s+1)
\end{align*}
where $T$ is a formal variable, the equations in Proposition~1 are valid for all $r,\,s\ge1$ except $(r,s)=(1,1)$.
 
\begin{thm1} Suppose $k$ is even and $k\ge4$.  In $\mathcal{DZ}_k$, we have 

1)
\begin{equation*}
\sum_{\begin{subarray}{c} r=2 \\ r: {\rm even} \end{subarray}}^{k-2} Z^{\odd\odd}_{r,k-r}=\frac{1}{4} Z^{\odd}_k. 
\end{equation*}

2) Each $P^{\odd\even}_{r,k-r}$ with $r$ even can be written as a $\Q$-linear 
combination of $P^{\odd\odd}_{i,j} \ (i,j: {\rm \, even}, i+j=k )$ and $Z^{\odd}_k$
\end{thm1}

\begin{proof} 
Consider the generating functions
\begin{align*}
\ZZ^{\even\odd}_k (X,Y) &= \sum_{r+s=k} Z^{\even\odd}_{r,s} X^{r-1} Y^{s-1} ,  \ \ZZ^{\odd\even}_k (X,Y) = \sum_{r+s=k} Z^{\odd\even}_{r,s} X^{r-1} Y^{s-1}, \\
\ZZ^{\odd\odd}_k (X,Y) &= \sum_{r+s=k} Z^{\odd\odd}_{r,s} X^{r-1} Y^{s-1}. 
\end{align*}
Here and in the following, the sum $\sum_{r+s=k}$ always means $\sum_{r+s=k,\, r,s\ge1}$. The double shuffle relations \eqref{ds1} and \eqref{ds2} are equivalent to the relations
\begin{align}
\label{e_2_1}& \ZZ^{\odd\even}_k(X,Y) +\ZZ^{\even\odd}_k (Y,X) = \ZZ^{\odd\even}_k (X+Y,Y) +\ZZ^{\odd\odd}_k(X+Y,X), \\
\label{e_2_2} & \ZZ^{\odd\odd}_k (X,Y) +\ZZ^{\odd\odd}_k (Y,X) + \ZZ^{\odd}_k \cdot \frac{X^{k-1} - Y^{k-1}}{X-Y} = \ZZ^{\even\odd}_k (X+Y,Y)+\ZZ^{\even\odd}_k (X+Y,X).
\end{align}
Substituting $X=1,Y=0$ in \eqref{e_2_1} and $X=1,Y=-1$ in \eqref{e_2_2},  we respectively obtain
\begin{align}
\label{e_2_7} &Z^{\odd\even}_{k-1,1}+Z^{\even\odd}_{1,k-1} = Z^{\odd\even}_{k-1,1}+\sum_{r=1}^{k-1} Z^{\odd\odd}_{r,k-r},\\
\label{e_2_4} &2 \sum_{r=1}^{k-1} (-1)^{r-1} Z^{\odd\odd}_{r,k-r} +Z^{\odd}_k = 2Z^{\even\odd}_{1,k-1} .
\end{align}
We divide \eqref{e_2_4} by 2 and add \eqref{e_2_7} to obtain 
\[ \frac12 Z^{\odd}_k= 2\sum_{\substack{r=2\\ r:\, even}}^{k-2} Z^{\odd\odd}_{r,k-r}\] 
and hence 1) of Theorem.

To prove 2), we need the following lemma.
\begin{lem1}\label{10} Let $k\geq4$ be an even integer and $a_{i,j},b_{i,j},c_{i,j}$ be rational numbers. Then the following two statements are equivalent.

1) \ The relation
\[ \sum_{i+j=k} a_{i,j} Z^{\even\odd}_{i,j} + \sum_{i+j=k} b_{i,j}Z^{\odd\even}_{i,j} +\sum_{i+j=k} c_{i,j} Z^{\odd\odd}_{i,j} \equiv 0 \pmod{\Q Z^{\odd}_k} \]
holds in $\mathcal{DZ}_k$ (as before $\sum_{i+j=k}$  means $\sum_{i+j=k,\, i,j\ge1}$). 

2) \ There exist some homogeneous polynomials $F,G\in\Q[X,Y]$ of degree $k-2$ such that
\begin{align*}
 &F(Y_1 , X_1)+F(X_2,Y_2)-F(X_2, X_2+Y_2 )  -F(X_3+Y_3,X_3) \\
\notag &+G(X_3,Y_3)+G(Y_3,X_3)- G(X_1,X_1+Y_1) - G(X_1+Y_1,X_1)\\
&=\sum_{i+j=k} \binom{k-2}{i-1} a_{i,j} X_1^{i-1}Y_1^{j-1} + \sum_{i+j=k} \binom{k-2}{i-1} b_{i,j} X_2^{i-1}Y_2^{j-1}+\sum_{i+j=k} \binom{k-2}{i-1} c_{i,j} X_3^{i-1}Y_3^{j-1}.
 \end{align*}
\end{lem1}
\begin{proof}  This is an analogue of Proposition 5.1 in \cite{gkz}. Take $F(X,Y)=\binom{k-2}{r-1} X^{r-1} Y^{s-1}$
(and $G=0$)
and compute the coefficients of $F(Y_1 , X_1)+F(X_2,Y_2)-F(X_2, X_2+Y_2 )  -F(X_3+Y_3,X_3) $ 
using binomial theorem. Then the relation in 1) is exactly (not only $\bmod\  \Q Z_k^\odd$ but as an exact
equality) the relation \eqref{ds1}. 
Similarly, by taking $G(X,Y)=\binom{k-2}{r-1} X^{r-1} Y^{s-1}$
(and $F=0$) and computing the coefficients of $G(X_3,Y_3)+G(Y_3,X_3)- G(X_1,X_1+Y_1) - G(X_1+Y_1,X_1)$,
we see that the relation in 1) is the relation \eqref{ds2} modulo $\Q Z_k^\odd$. Since any relation of the 
form in 1) in $\mathcal{DZ}_k$ should come from a linear combination of \eqref{ds1} and \eqref{ds2} modulo $\Q Z_k^\odd$,
and any homogeneous polynomial is a linear combination of monomials, we obtain the lemma.
\end{proof}

Using the lemma, we are going to produce enough relations of the form
\begin{equation}
\label{eqev} 
\sum_{\begin{subarray}{c} r+s=k \\r,s:\, {\rm even} \end{subarray}} \alpha_{r,s} P^{\odd\even}_{r,s} \equiv  \sum_{\begin{subarray}{c} r+s=k \\ r,s:\, {\rm even}\end{subarray}} \beta_{r,s} P^{\odd\odd}_{r,s} \pmod{\Q Z^{\odd}_k} 
\end{equation}
such that we can solve these in $P^{\odd\even}_{r,s}$.
In view of the relations
\begin{equation} \label{pasz} 
P^{\odd\even}_{r,s} = Z^{\odd\even}_{r,s}+Z^{\even\odd}_{s,r},\  P^{\odd\odd}_{r,s} 
\equiv Z^{\odd\odd}_{r,s} + Z^{\odd\odd}_{s,r}\pmod{\Q Z^{\odd}_k}
\end{equation}
and the lemma, we obtain the relation of the form \eqref{eqev} if we can take $F$ and $G$ in 2) of Lemma 1 
so that the coefficients satisfy
\begin{enumerate}
\item[(i)] $a_{i,j} = b_{j,i}$,
\item[(ii)] $c_{i,j}=c_{j,i}$,
\item[(iii)] $a_{i,j}=b_{i,j}=c_{i,j}=0$ \ for all odd $i, j$.
\end{enumerate}
We now work for convenience with inhomogeneous polynomials. Recall the usual correspondences
$f(x)=F(x,1)$ and $F(X,Y)=Y^{k-2}f(X/Y)$, and the action of the group $\Gamma=\PGL_2(\Z)$
on the space of polynomials of degree at most $k-2$ by
\begin{equation}\label{action} f(x)\Big\vert_{k-2}  \begin{pmatrix} a&b \\c&d \end{pmatrix} = (cx+d)^{k-2} f\left( \frac{ax+b}{cx+d} \right).
\end{equation}
We extend this action to the group ring $\Z[\Gamma]$ by linearity. Set
\[ T=\begin{pmatrix} 1&1\\0&1 \end{pmatrix},\ S=\begin{pmatrix} 0&-1\\1&0 \end{pmatrix},\ \varepsilon=\begin{pmatrix} -1&0\\0&1 \end{pmatrix}, \ \delta=\begin{pmatrix} 0&1\\1&0 \end{pmatrix}.\]
Then the left-hand side of the equation in 2) of Lemma~1 can be written in inhomogeneous form as
\begin{equation}
\label{e_2_15}
\left(f\big| \delta\big.-g \big| \big. (TST+TS\varepsilon )\right)(x_1)+\left(f\big| (1-TST)\big.\right)(x_2) 
-\left(f\big| \big.TS\varepsilon-g\big| \big. (1+\delta)\right)(x_3) .
\end{equation}
(We write  $\big| \big. $ instead of $\big|_{k-2} \big. $.)

\begin{lem2}  Suppose the polynomial $f(x)$ (of degree at most $k-2$) satisfies 
$f\big| \big.TST\varepsilon=f$ and put $g=\frac{1}{2} f\big| \big.  T\varepsilon $. Then the expression
\eqref{e_2_15} gives the coefficients (in Lemma~1-2) ) satisfying the above three conditions (i), (ii), (iii).
\end{lem2}

\begin{proof} Inserting $g=\frac{1}{2} f\big| \big.  T\varepsilon $ into \eqref{e_2_15} and using the assumption
$f\big| \big.TST\varepsilon=f$, which is equivalent to $f\big| \big.TS=f\big| \big.T\varepsilon$ since 
$(T\varepsilon)^2=1$, and also using the identities $TSTST=S, T\varepsilon T=\varepsilon, \varepsilon S=\delta, \delta\varepsilon=\varepsilon\delta=S$ in $\Gamma$, 
we can write \eqref{e_2_15} as
\begin{align}
\label{e_2_16} \left(f\big| \big.  \delta(1-\varepsilon)\right)(x_1) +
\left(f\big| \big.  (1-\varepsilon)\right)(x_2)-\left(f\big| \big.  T(1-\varepsilon)\right)(x_3).
\end{align}
Now the condition (iii) (the polynomial is even) is clear from this (being killed by $1+\varepsilon$),
and the conditions (i) and (ii) are respectively the consequences of the equations
\begin{align*}
f\big| \big.  \delta(1-\varepsilon)\delta&=f\big| \big.  (1-\varepsilon),\\
f\big| \big.  T(1-\varepsilon)\delta&=f\big| \big.  T\delta-f\big| \big.  TS=f\big| \big.  T\varepsilon S
-f\big| \big.  T\varepsilon=f\big| \big. T(1-\varepsilon).
\end{align*}
\end{proof}

Noting $TST\varepsilon= \left(\begin{smallmatrix} -1&0\\-1&1 \end{smallmatrix}\right)$ and hence
\[ (-x+1)\left(\frac{-x}{-x+1}\right)=-x,\ \ \text{and}\ \ (-x+1)\left(\frac{-x}{-x+1}-2\right)=x-2,\]  we see that 
the polynomials $x^r(x-2)^{k-2-r}$ for $r=0,2,\ldots, k-2$ (even) satisfy the condition
$f \big| \big.TST\varepsilon=f$ in Lemma~2. With this choice of $f$ (for  $r=0,2,\ldots, k-4$) 
and $g$ in Lemma~2,
we compute the coefficients in Lemma~1 by noting \eqref{pasz}, \eqref{e_2_16} and by 
using
 \begin{align*} &x^r(x-2)^{k-2-r}\vert(1-\ve)= x^r(x-2)^{k-2-r}-x^r(x+2)^{k-2-r}\\
 &=-\sum_{\substack{i=1\\i: odd}}^{k-2-r-1} \binom{k-2-r}{i} 2^{k-1-r-i}x^{r+i} \\
 &=- \sum_{\substack{i=r+2\\i: even}}^{k-2} \binom{k-2-r}{i-1-r} 2^{k-i} x^{i-1} \quad\  (r+i \rightarrow i-1) \\
 &= -  \binom{k-2}{r}^{-1}\sum_{\substack{i=r+2\\ i:even}}^{k-2} \binom{k-2}{i-1}  \binom{i-1}{r} 2^{k-i}x^{i-1},
 \end{align*}
to obtain a relation of the form
\[  \sum_{\substack{i=r+2\\i: even}}^{k-2} \binom{i-1}{r} 2^{k-i}P_{i,k-i}^{\odd\even}
\equiv \text{linear combination of } P_{\text{even}, \text{even}}^{\odd\odd}\pmod {\Q Z_k^\odd}. \] 
When we put  $r=k-4,\ldots,2,0$, we can solve these congruences successively in each $P_{i,k-i}^{\odd\even}$
for $i=k-2,k-4,\ldots, 2$. This completes the proof of Theorem~1.

\end{proof}

\section{The  double Eisenstein series of level $2$}
\subsection{Definition and the double shuffle relations}

We introduce the double Eisenstein series of level 2 and first show that they satisfy the double shuffle 
relations.

Let $\mathbf{ev}$ (resp. $\mathbf{od}$) be the set of even (resp. odd) integers and $\tau$ a variable in the upper half-plane. Define the three
double Eisenstein series $G^{\even\odd}_{r,s} (\tau), G^{\odd\even}_{r,s} (\tau)$, and $G^{\odd\odd}_{r,s} (\tau)$ by 
\begin{equation}\label{e_1_10} 
\begin{aligned}
G^{\even\odd}_{r,s} (\tau)& := (2\pi i)^{-r-s}\!\!\!\sum_{\begin{subarray}{c} \lambda > \mu>0 \\ \lambda \in \mathbf{ev}\cdot\tau+\mathbf{ev} \\ \mu \in \mathbf{ev}\cdot\tau + \mathbf{od} \end{subarray}} 
\frac{1}{\lambda^r \mu^s} =(2\pi i)^{-r-s}\!\!\!\!\!\sum_{\begin{subarray}{c} m\tau+n > m'\tau+n' >0 \\ m\in \mathbf{ev},n \in \mathbf{ev}\\ m' \in \mathbf{ev}, n' \in \mathbf{od} \end{subarray}} 
\frac{1}{(m\tau+n)^r (m'\tau+n')^s},\\
G^{\odd\even}_{r,s} (\tau)& := (2\pi i)^{-r-s}\!\!\!\sum_{\begin{subarray}{c} \lambda > \mu>0 \\ \lambda \in \mathbf{ev}\cdot\tau+\mathbf{od} \\ \mu \in \mathbf{ev}\cdot\tau + \mathbf{ev} \end{subarray}} 
\frac{1}{\lambda^r \mu^s},\quad 
G^{\odd\odd}_{r,s} (\tau) := (2\pi i)^{-r-s}\!\!\!\sum_{\begin{subarray}{c} \lambda > \mu>0 \\ \lambda \in \mathbf{ev}\cdot\tau+\mathbf{od} \\ \mu \in \mathbf{ev}\cdot\tau + \mathbf{od} \end{subarray}} 
\frac{1}{\lambda^r \mu^s}.
\end{aligned}
\end{equation}
Here, the positivity $m\tau+n>0$ of a lattice point means either $m>0$ or $m=0, n>0$, and $m\tau+n>m'\tau+n'$ means $(m-m')\tau+(n-n')>0$. We assume  $r\ge3$ and $s\ge2$ for the absolute convergence.

All the series in  \eqref{e_1_10} is easily seen to be invariant the translation $\tau\to\tau+1$, and hence have
Fourier expansions. The Fourier series developments can be deduced in a quite similar manner to the full modular case \cite{gkz}. 

\begin{thm2}\label{6}
Let $r\geq 3$ and $s\geq2$ be integers and set $k=r+s$.  We have the following $q$-series expansions ($q=e^{2\pi i \tau}$).

\begin{align*}
G^{\even\odd}_{r,s}(\tau)  =& \cz^{\even\odd} (r,s) + g_{r,s}^{\even\odd} (q) \\
& +\sum_{ \begin{subarray}{c} p+h=k \\ p>1 \end{subarray} } \left\{ \left( (-1)^s \binom{p-1}{s-1} +\delta_{p,s} \right) \cz^{\odd} (p) g_h^{\even} (q) +(-1)^{p+r} \binom{p-1}{r-1}\cz^{\odd} (p) g_h^{\odd}(q) \right\}, \\
G_{r,s}^{\odd\even} (\tau)=&  \cz^{\odd\even} (r,s) + g_{r,s}^{\odd\even} (q) \\
& +\sum_{ \begin{subarray}{c} p+h=k \\ p>1 \end{subarray} } \left\{  (-1)^s \binom{p-1}{s-1} \cz^{\odd} (p) g_h^{\odd}(q)  +\delta_{p,s}\cz^{\even} (p) g_h^{\odd} (q) +(-1)^{p+r} \binom{p-1}{r-1} \cz^{\odd} (p) g_h^{\even}(q) \right\}, \\
G_{r,s}^{\odd\odd} (\tau) =&  \cz^{\odd\odd} (r,s) + g_{r,s}^{\odd\odd} (q) \\
&+\sum_{ \begin{subarray}{c} p+h=k \\ p>1 \end{subarray} } \left\{ \left((-1)^s \binom{p-1}{s-1} +(-1)^{p+r} \binom{p-1}{r-1}\right) \cz^{\even} (p) g_h^{\odd} (q) +\delta_{p,s}\cz^{\odd} (p) g_h^{\odd}(q) \right\}, 
\end{align*}
where $\delta_{p,s}$ is Kronecker's delta, $\cz^{\ast\ast} (r,s)=(2\pi i)^{-r-s} \zeta^{\ast\ast} (r,s)$ and $\cz^{\ast} (k) = (2\pi i)^{-k} \zeta^{\ast} (k)$ $(\ast=\even\text{ or }\odd)$, and the $g$'s are the 
following $q$-series:
\begin{align*}
g_{r,s}^{\even\odd} (q) &= - \frac{(-1)^{r+s}}{2^{r+s}(r-1)!(s-1)!} \sum_{\begin{subarray}{c} m>m'>0 \\ u,v>0 \end{subarray} } u^{r-1} (-v)^{s-1} q^{um+vm'},  \\
g_{r,s}^{\odd\even} (q) &= -\frac{(-1)^{r+s}}{2^{r+s}(r-1)!(s-1)!} \sum_{\begin{subarray}{c} m>m'>0 \\ u,v>0 \end{subarray} }(-u)^{r-1} v^{s-1}q^{um+vm'},  \\
g_{r,s}^{\odd\odd} (q) &= \frac{(-1)^{r+s}}{2^{r+s}(r-1)!(s-1)!} \sum_{\begin{subarray}{c} m>m'>0 \\ u,v>0 \end{subarray} } (-u)^{r-1} (-v)^{s-1} q^{um+vm'}, 
\end{align*}
and 
\[g_r^{\even} (q) = \frac{(-1)^r}{2^r(r-1)!}\sum_{u,m>0} u^{r-1} q^{um},\ g_r^{\odd} (q) =\frac{(-1)^r}{2^r(r-1)!} \sum_{u,m>0} (-1)^u u^{r-1}  q^{um}. \]
\end{thm2}

\begin{proof}
Put, for positive integers $r$ and $s$,
\[
\varphi_r^{\even} (q) = \frac{(-1)^r }{2^r (r-1)!} \sum_{u>0} u^{r-1} q^{u/2}, \ \varphi_r^{\odd} (q) = \frac{(-1)^{r} }{2^r (r-1)!}  \sum_{u>0} (-1)^u u^{r-1} q^{u/2}.
\]
Then the series $g_r^\ast(q), g_{r,s}^{\ast\ast}(q)$ in the theorem can be written by using $\varphi_r^{\ast}(q) $ as
\begin{align*} 
g_r^{\even} (q)& = \sum_{m>0} \varphi_r^{\even} (q^{2m}),\ g_r^{\odd} (q) = \sum_{m>0} \varphi_r^{\odd} (q^{2m}), \\ 
g_{r,s}^{\even\odd} (q) &= \sum_{m>m'>0 }\varphi_r^{\even} (q^{2m}) \varphi_s^{\odd} (q^{2m'}) ,\ g_{r,s}^{\odd\even} (q) = \sum_{m>m'>0 }\varphi_r^{\odd} (q^{2m}) \varphi_s^{\even} (q^{2m'}), \\
g_{r,s}^{\odd\odd} (q) &= \sum_{m>m'>0 }\varphi_r^{\odd} (q^{2m}) \varphi_s^{\odd} (q^{2m'}), \ g_{r,s}^{\even\even} (q) = \sum_{m>m'>0 }\varphi_r^{\even} (q^{2m}) \varphi_s^{\even} (q^{2m'}).
\end{align*}
The computation of the Fourier series can be carried out in a completely similar fashion as done in \cite{gkz}, dividing the sum of the defining series into four terms, according as $m=m'=0, m=m'>0, m>m'=0,
m>m'>0$.
For instance, in the case of $G^{\even\odd}_{r,s}(\tau)$, we compute
\[ G^{\even\odd}_{r,s}(\tau) =\Biggl\{ \sum_{\begin{subarray}{c}  m=m'=0 \\ n>n' >0 \\ m,m',n\in \mathbf{ev} \\ n' \in \mathbf{od} \end{subarray}} + \sum_{\begin{subarray}{c}  m=m'>0 \\ n>n' \\ m,m',n\in \mathbf{ev} \\ n' \in \mathbf{od} \end{subarray}} + \sum_{\begin{subarray}{c}  m>m'=0 \\  n' >0  \\ m,m',n\in \mathbf{ev} \\ n' \in \mathbf{od} \end{subarray}} +\sum_{\begin{subarray}{c}  m>m'>0  \\ m,m',n\in \mathbf{ev} \\ n' \in \mathbf{od}  \end{subarray}} \Biggr\} \frac{(2\pi i)^{-r-s}}{(m\tau+n)^r (m'\tau+n')^s},\]
using the partial fraction decomposition 
\begin{align*}
\notag \frac{1}{(\tau+n)^r (\tau + n')^s} =& (-1)^s \sum_{i=0}^{r-1} \binom{s+i-1}{i} \frac{1}{(\tau+n)^{r-i}}\cdot \frac{1}{(n-n')^{s+i}} \\
 &+ \sum_{j=0}^{s-1} (-1)^j \binom{r+j-1}{j} \frac{1}{(\tau+n')^{s-j}}\cdot \frac{1}{(n-n')^{r+j}} 
\end{align*}
and the formulas
\begin{align*} 
\sum_{n\in \Z } \frac{1}{(\tau+2n)^r} &= \frac{(-2\pi i)^r}{2^r(r-1)!} \sum_{u>0} u^{r-1} q^{u/2} = (2\pi i)^r \varphi_r^{\even} (q) \ \ \ (r\geq2),\\
\sum_{n\in \Z } \frac{1}{(\tau+2n+1)^r} &= \frac{(-2\pi i)^r}{2^r(r-1)!} \sum_{u>0} (-1)^u u^{r-1} q^{u/2} = (2\pi i)^r \varphi_r^{\odd} (q) \ \ \ (r\geq2)
\end{align*}
(consequences of the standard Lipschitz formula, and when $r=1$ we use 
\begin{align*}
\lim_{N\rightarrow \infty} \sum_{n=-N}^N \frac{1}{\tau+2n} & = -\frac{\pi i}{2} +\frac{(-2\pi i )}{2} \sum_{u>0} q^{u/2} = -\frac{\pi i}{2} + (2\pi i) \varphi_{1}^{\even} (q),\\
\lim_{N\rightarrow \infty} \sum_{n=-N}^N \frac{1}{\tau+2n+1}& = -\frac{\pi i}{2} +(2\pi i) \varphi_1^{\odd} (q) 
\end{align*}
instead).  We leave the details to the reader.
\end{proof}

We remark that each series in Theorem~2 is in $\R+q\Q[[q]]+\sqrt{-1}\R[[q]]$, and the terms in $\sqrt{-1}\R[[q]]$ (``imaginary part'')
only come from the terms having $\cz^{\ast}(p)$ with odd $p$ as coefficients. 

Now we extend the definition of the double Eisenstein series for any (non-converging) $r,s\ge1$ (except $r=s=1$), by using $q$-series.
For this, we separately define the imaginary part and the ``combinatorial part''. 
First we define the imaginary parts as
\begin{align*}
I^{\even\odd}_{r,s}(q)&= \sum_{\begin{subarray}{c} p+h=k \\ p :odd \end{subarray}} \left\{ \left( (-1)^s \binom{p-1}{s-1} +\delta_{p,s} \right) \cz^{\odd} (p) g_h^{\even} (q) +(-1)^{p+r} \binom{p-1}{r-1}\cz^{\odd} (p) g_h^{\odd}(q) \right\}, \\
I^{\odd\even}_{r,s}(q)&= \sum_{\begin{subarray}{c} p+h=k \\ p :odd \end{subarray}} \left\{  (-1)^s \binom{p-1}{s-1} \cz^{\odd} (p) g_h^{\odd}(q)  +\delta_{p,s}\cz^{\even} (p) g_h^{\odd} (q) +(-1)^{p+r} \binom{p-1}{r-1} \cz^{\odd} (p) g_h^{\even}(q) \right\}, \\
I^{\odd\odd}_{r,s}(q)  &= \sum_{\begin{subarray}{c} p+h=k \\ p :odd \end{subarray}} \left\{ \left( (-1)^s \binom{p-1}{s-1} +(-1)^{p+r} \binom{p-1}{r-1}  \right) \cz^{\even} (p) g_h^{\odd} (q) +\delta_{p,s}\cz^{\odd} (p) g_h^{\odd}(q) \right\}.
\end{align*}
The sum is over $p,h\ge1$ with $p$ odd.  Note that the regularized values $\cz^{\odd} (1)$ and $\cz^{\even} (1)$ are defined by \eqref{regval} and thus for any positive integers $r,s$, these series are elements of $\sqrt{-1}\R[T][[q]]$. Secondly, we define the part in $q\Q[[q]]$ which is referred to as the combinatorial double Eisenstein series. Put
\begin{align*}
\beta_{r,s}^{\even\odd} (q) &= \sum_{p+h=r+s} \left\{ \left( (-1)^s \binom{p-1}{s-1} +\delta_{p,s} \right) \beta_p^{\odd} g_h^{\even} (q) +(-1)^{p+r} \binom{p-1}{r-1} \beta_p^{\odd} g_h^{\odd}(q) \right\}, \\
\beta_{r,s}^{\odd\even} (q) &= \sum_{p+h=r+s} \left\{  (-1)^s \binom{p-1}{s-1} \beta_p^{\odd} g_h^{\odd}(q)  +\delta_{p,s} \beta_p^{\even} g_h^{\odd} (q) +(-1)^{p+r} \binom{p-1}{r-1} \beta_p^{\odd} g_h^{\even}(q) \right\}, \\
\beta_{r,s}^{\odd\odd} (q) &= \sum_{p+h=r+s} \left\{ \left( (-1)^s \binom{p-1}{s-1} +(-1)^{p+r} \binom{p-1}{r-1}  \right) \beta_p^{\even} g_h^{\odd} (q) +\delta_{p,s} \beta_p^{\odd} g_h^{\odd}(q) \right\}, 
\end{align*}
where 
\[\beta_r^{\even} = -\frac{B_r}{2^{r+1}\cdot r!}, \ \beta_r^{\odd} = - \frac{(1-2^{-r}) B_r}{2\cdot r!}\quad (B_r= \text{the Bernoulli number}), \]
and as before the condition ``$p+h=r+s$'' includes ``$p,h\geq1$''.
Let \[ \overline{g}_r^{\even} (q) := -\sum_{m>0} m \varphi_{r+1}^{\even} (q^{2m}), \ \overline{g}_r^{\odd} (q) := - \sum_{m>0} m \varphi_{r+1}^{\odd} (q^{2m})\quad (r\ge0), \]
and for integers $r,s\geq1$ let  
\begin{align*}
\varepsilon_{r,s}^{\even\odd} (q) =& \delta_{r,2} \overline{g}_s^{\odd} (q) -\delta_{r,1}\overline{g}_{s-1}^{\odd} (q) + \delta_{s,1} (\overline{g}_{r-1}^{\even} (q) +g_r^{\even} (q))+\delta_{r,1}\delta_{s,1} \alpha_1,\\
\varepsilon_{r,s}^{\odd\even} (q) =& \delta_{r,2} \overline{g}_s^{\even} (q) -\delta_{r,1}\overline{g}_{s-1}^{\even} (q) + \delta_{s,1} (\overline{g}_{r-1}^{\odd} (q) +g_r^{\odd} (q))+\delta_{r,1}\delta_{s,1} \alpha_2,\\
\varepsilon_{r,s}^{\odd\odd} (q) =& \delta_{r,2} \overline{g}_s^{\odd} (q) -\delta_{r,1}\overline{g}_{s-1}^{\odd} (q) + \delta_{s,1} (\overline{g}_{r-1}^{\odd} (q) +g_r^{\odd} (q))+\delta_{r,1}\delta_{s,1} \alpha_3,
\end{align*}
where 
\begin{equation}
\label{e_3_8} \alpha_1=\overline{g}_0^{\odd} (q)- \frac{1}{2} \overline{g}_0^{\even}(q), \alpha_2=-\alpha_1, \ \alpha_3 = 4g_2^{\odd} (q) +\dfrac{1}{2} \overline{g}_0^{\even} (q) .
\end{equation}
Note that each $\varepsilon^{\ast,\ast}_{r,s}$ is $0$ when $r\geq3$ and $s\geq2$. The combinatorial double Eisenstein series are then defined, for positive integers $r,s\geq1$, by
\begin{align*}
C^{\even\odd}_{r,s}(q)  =& g_{r,s}^{\even\odd} (q) +\beta_{r,s}^{\even\odd} (q) +\dfrac{1}{4} \varepsilon_{r,s}^{\even\odd} (q), \\
C^{\even\odd}_{r,s}(q) =& g_{r,s}^{\odd\even} (q) +\beta_{r,s}^{\odd\even} (q) +\dfrac{1}{4} \varepsilon_{r,s}^{\odd\even} (q), \\
C^{\odd\odd}_{r,s}(q) =&  g_{r,s}^{\odd\odd} (q) +\beta_{r,s}^{\odd\odd} (q) +\dfrac{1}{4} \varepsilon_{r,s}^{\odd\odd} (q).
\end{align*}
Lastly, the constant term of the double Eisenstein series is given by the (regularized) double zeta values. 
\begin{dfn}\label{15} For any integers $r,s\geq1$ with $(r,s)\neq(1,1)$, we define 
\begin{align*}
G^{\even\odd}_{r,s}(q)&:=\cz^{\even\odd} (r,s) +C^{\even\odd}_{r,s}(q) +I^{\even\odd}_{r,s}(q) ,\\
G^{\odd\even}_{r,s}(q)&:=\cz^{\odd\even} (r,s) +C^{\odd\even}_{r,s}(q)+I^{\odd\even}_{r,s}(q) ,\\
G^{\odd\odd}_{r,s}(q)&:=\cz^{\odd\odd} (r,s) +C^{\odd\odd}_{r,s}(q) +I^{\odd\odd}_{r,s}(q).
\end{align*}
\end{dfn}

To state the double shuffle relations in the forms \eqref{e_1_7} and \eqref{e_1_8} for these series, we need usual Eisenstein series for the congruence
subgroup $\Gamma_0(2)=\left\{\left(\begin{smallmatrix} a & b \\ c & d \end{smallmatrix}\right)\in \SL\,\vert\, c\equiv0\bmod 2\right\}$.
For each integer $k\geq3$, let the series $G_k^{(i\infty)} (\tau)$ and $G_k^{(0)} (\tau)$ be defined by
\[ 
G_k^{(i\infty)} (\tau) := \sum_{\begin{subarray}{c} \lambda >0 \\ \lambda \in \mathbf{ev}\cdot \tau + \mathbf{od} \end{subarray}} \frac{1}{\lambda^k}
=\sum_{\begin{subarray}{c} m\tau+n >0 \\ m:\,even,\,n:\,odd \end{subarray}} \frac{1}{(m\tau+n)^k},\] 
and
\[
G_k^{(0)} (\tau) :=  \sum_{\begin{subarray}{c} \lambda >0 \\ \lambda \in \mathbf{od}\cdot \tau + \Z \end{subarray}} \frac{1}{\lambda^k}=
\sum_{\begin{subarray}{c} m\tau+n >0 \\ m:\,odd \end{subarray}} \frac{1}{(m\tau+n)^k}.
\]
When $k\geq4$ is even, the functions $G_k^{(i\infty)} (\tau)$ and $G_k^{(0)} (\tau)$ are the Eisenstein series for $\Gamma_0(2)$ associated to cusps $i\infty$ and $0$ respectively, 
and as such they are modular of weight $k$ with respect to $\Gamma_0(2)$.
The Fourier series of $G_k^{(i\infty)} (\tau)$ and $G_k^{(0)} (\tau)$ are given as follows.  Let  $G_k(\tau)$ be the Eisenstein series of weight $k$ for $\SL$:  
\begin{equation}\label{eisen}
G_k(\tau): =\sum_{\Z\tau+\Z \ni m\tau+n>0 } \frac1{(m\tau+n)^{k}}= \zeta(k)+\frac{(-2\pi i)^k}{(k-1)!} \sum_{n\geq1} \sigma_{k-1} (n) q^n,\quad (\sigma_{k-1} (n)= \sum_{d|n} d^{k-1}).
\end{equation}
(Note that this gives a non-zero function even when $k$ is odd.)  With this we have
\begin{equation}\label{e_3_2}
\begin{aligned}G_k^{(i\infty)} (\tau) &= G_k(2\tau)-2^{-k} G_k(\tau) = \zeta^{\odd} (k) + \frac{(-2\pi i)^k}{2^k(k-1)!}
\sum_{n\ge1}\Bigl(\sum_{d|n}(-1)^dd^{k-1}\Bigr)q^n ,\\
G_k^{(0)} (\tau) &= G_k(\tau)- G_k(2\tau) = \frac{(-2\pi i)^k}{(k-1)!} \sum_{n\geq1}
\Bigl(\sum_{\substack{d|n\\ n/d: odd}}d^{k-1}\Bigr)q^n. 
\end{aligned}
\end{equation}
We define the $q$-series $G_k(q), G_k^{(i\infty)} (q)$, and $G_k^{(0)} (q)$ for any $k\ge1$ by the (convergent) $q$-series
on the right-hand sides of \eqref{eisen} and \eqref{e_3_2}, with the regularized values  \eqref{regval} and $\zeta(1)=T (=\zeta^\even(1)+\zeta^\odd(1))$.
Finally we set 
\begin{align*} 
G_k^\odd (q) &=(2\pi i)^{-k} G_k^{(i\infty)}(q)=\cz^{\odd} (k)+g_k^{\odd} (q),\\  
G_k^\even(q) &=2^{-k}(2\pi i)^{-k}  G_k (q)=\cz^\even(k)+g_k^\even(q) .
\end{align*}

\begin{thm3}\label{3_3} For any integers $r,s\geq1$ with $( r,s)\neq (1,1)$, we have 
\begin{align*}
G_r^\odd(q)G_s^\even(q) + \dfrac{1}{4} (\delta_{r,2} \overline{g}_s^{\even} (q) + \delta_{s,2} \overline{g}_r^{\odd} (q) ) &= G^{\odd\even}_{r,s}(q) +G^{\even\odd}_{s,r}(q)\\ 
&= \sum_{i+j=r+s} \left(\binom{i-1}{r-1} 
G^{\odd\even}_{i,j}(q) +\binom{i-1}{s-1} G^{\odd\odd}_{i,j}(q)\right), \\
G^\odd_r(q)G^\odd_s(q) + \frac{1}{4} (\delta_{r,2} \overline{g}_s^{\odd} (q) + \delta_{s,2} \overline{g}_r^{\odd} (q) )
&= G^{\odd\odd}_{r,s}(q)+G^{\odd\odd}_{s,r}(q)+G^\odd_{r+s}(q) \\
&= \sum_{i+j=r+s} \left( \binom{i-1}{r-1}+\binom{i-1}{s-1} \right) G^{\even\odd}_{i,j}(q).
\end{align*}
\end{thm3}
The proof of the theorem will be postponed to \S\ref{proof}.

\subsection{Double Eisenstein series and period polynomials}

In this subsection, we describe a mysterious connection between our double Eisenstein series and the period polynomials associated to cusp forms on $\Gamma_0(2)$. 
This kind of connection was first observed in the full modular case \cite{kaneko}, which will be recalled briefly in the appendix 
\S\ref{appendix} for the convenience of the reader because
the reference \cite{kaneko} circulated only among participants of the conference.

Let us recall the theory of period polynomials for $\Gamma_0(2)$ given in \cite{ki} and \cite{fy}.  We follow the formulation of \cite{KoZa}.
Recall the group $\Gamma_0(2)$ is generated by two elements (see e.g. \cite[Theorem 4.3]{ap}))
\[    T= \begin{pmatrix} 1&1\\ 0&1 \end{pmatrix},\ M= \begin{pmatrix} -1&-1\\ 2&1 \end{pmatrix}. \]
Let $k$ be a positive even integer and $V_k$ be the space of polynomials with rational coefficients of degree at most $k-2$:
\[ V_k:=\left\{ P(X)\in\Q[X]\,\vert\, \deg (f)\le k-2\right\}. \] 
The group $\Gamma_0(2)$ acts on $V_k$ as in (\ref{action}) and this action extends to that of the group ring $\Z[\Gamma_0(2)]$ as usual.  Consider the subspace 
$W_k$ of $V_k$ defined by
\[ W_k:=\left\{ P\in V_k\,\big\vert\, P\vert_{k-2} (1-T)(1+M)=0 \right\}. \] 
For a cusp form $f\in S_k(2):=\left\{\text{\,the space of cusp forms on }\Gamma_0(2)\,\right\}$, the period polynomial $r_f(X)$ is
given by 
\[ r_f(X):=\int_0^{i\infty} f( \tau) (X-\tau)^n d\tau. \]
It is implicitly shown in the proof of Proposition 3 in \cite{ki} that 
\[ r_f(X)\in W_k\otimes \C.  \]
Now we consider the even and odd parts of polynomials separately. Put $\varepsilon=\left(\begin{smallmatrix} -1&0\\0&1 \end{smallmatrix}\right)$. By the identity
\[ \ve (1-T)(1+M)=-(1-T)(1+M)T^{-1}\ve \]
(every matrix identity is regarded projectively, i.e., as that in $\Gamma_0(2)/\left\{\pm1\right\}$), we see that if $P\in W_k$ then
$P\vert (1\pm\ve)\in W_k$, and so we have the direct sum decomposition 
\[ W_k=W_k^+\oplus W_k^-, \]
where $W_k^+$ (resp. $W_k^-$) is the even (resp. odd) part of $W_k$:
\[ W_k^{\pm}:=\left\{ P\in V_k\,\big\vert\  P\vert\ve=\pm P \text{ and } P\vert (1-T)(1+M)=0\,\right\}. \]
We also denote by $r_f^{\pm}(X)$ the even and odd part of $r_f(X)$,
\[  r_f^{\pm}(X):=\frac12r_f(X)\vert (1\pm\ve), \]
and by $r^\pm$ the map
\[ r^\pm : S_k(2)\ni f\longmapsto r_f^\pm(X)\in W_k^\pm\otimes\C. \]
For the space $W_k^+$ of even polynomials, we have two obvious elements $1$ and $X^{k-2}$. This is clear for $1$ because $1\vert (1-T)=0$.   For $X^{k-2}$, we note the
identity
\[ (1-T)(1+M)=(1-TM)(1+M)\quad (\text{because }M^2=1) \]
and $TM=\left(\begin{smallmatrix} 1&0\\2&1 \end{smallmatrix}\right)$ and thus $X^{k-2}\vert TM=X^{k-2}$. Hence, we have the decomposition
\[ W_k^+=\Q\cdot1\oplus\Q\cdot X^{k-2}\oplus W_k^{+,0}, \]
where
\[ W_k^{+,0}:=\Biggl\{ P(X)\in V_k\,\Big\vert\, P(X)=\sum_{\substack{i=2\\ even}}^{k-4} a_i X^i,\ P\vert(1-T)(1+M)=0\Biggr\}. \]
Let $r^{+,0}$ be the map $S_k(2)\to W_k^{+,0}$ obtained by the composition of $r^+$ and the natural projection $W_k^+\to W_k^{+,0}$.
From the works of Imamo\={g}lu-Kohnen \cite{ki} and Fukuhara-Yang \cite{fy}, we obtain the following

\begin{thm4}  For even $k$, the two maps
\[ r^+: S_k(2) \longrightarrow W_k^{+,0}\otimes\C\quad\text{and}\quad r^-: S_k(2)\longrightarrow W_k^-\otimes\C \]
are isomorphisms of vector spaces.
\end{thm4}

\begin{proof}  We know from \cite{ki} and \cite{fy} that both maps are injective. So all we have to show is
the dimensions of the target spaces are equal to the dimension of $S_k(2)$, which is equal to $[k/4]-1$.
We only calculate the dimension of $W_k^{+,0}$, since the other is similar and only the former is relevant to 
the subsequent story involving the double Eisenstein series.

Put $T'=\left(\begin{smallmatrix} 1&0\\-1&1 \end{smallmatrix}\right)$. Obviously $P=0$ is equivalent to $P\vert T'=0$.
For an even polynomial \[P(X)=\sum_{\substack{2\le i\le k-4\\ i: even}} a_i X^i\] with no constant term and no $X^{k-2}$ term,
we compute the condition $P\vert (1-T)(1+M)T'=0$ for $P$ being in $W_k^{+,0}$.  By
\[ (1-T)(1+M)T'=T'-TT'-{}^tT+MT'\quad (TMT'={}^tT=\begin{pmatrix} 1&0\\ 1&1 \end{pmatrix}) \] and
\[ TT'=\begin{pmatrix} 0&1\\ -1&1 \end{pmatrix},\ MT'=\begin{pmatrix} 0&-1\\ 1&1 \end{pmatrix}, \]
the condition becomes
\[ (-X+1)^{k-2}\left(P\bigl(\frac{X}{-X+1}\bigr)-P\bigl(\frac{1}{-X+1}\bigr)\right)
-(X+1)^{k-2}\left(P\bigl(\frac{X}{X+1}\bigr)-P\bigl(\frac{-1}{X+1}\bigr)\right)=0, \]
which is written as
\[ \sum_{\substack{2\le i\le k-4\\ i: even}} a_i(X^i-1)\left((-X+1)^{k-2-i}-(X+1)^{k-2-i}\right)=0. \]
Using binomial theorem, we can write this as 
\[ 2\sum_{\substack{1\le j\le k-3\\ j: odd}}\left(\sum_{\substack{2\le i\le k-4\\ i: even}}\left(\binom{i}{j}-\binom{i}{k-2-j}\right) a_{k-2-i} \right) X^j=0. \]
Therefore, the space $W_k^{+,0}$ is the set of polynomials 
\[P(X)=\sum_{\substack{2\le i\le k-4\\ i: even}} a_i X^i\]  whose (rational)
coefficients satisfy a set of linear relations
\begin{equation}\label{lineq} \sum_{\substack{2\le i\le k-4\\ i: even}}\left(\binom{i}{j}-\binom{i}{k-2-j}\right) a_{k-2-i}=0\quad (j=1,3,\ldots,k-3). \end{equation}
Clearly the equations for $j$ and $k-2-j$ have just opposite sign, and so we have to look only at the equations for $j\le k/2-1$.  
But then for $i<j$ the coefficient of $a_{k-2-i}$ is zero, and the coefficient matrix is upper triangular with non-zero diagonals.
We thus see that the rank of this matrix is $[(k+2)/4]-1$, and the dimension of $W_k^{+,0}$ is $k/2-2-\left([(k+2)/4]-1\right)=[k/4]-1$, 
as desired.
\end{proof}

\noindent{\bf Remark.}
We have not succeeded to characterize the (codimension 2) image of $S_k(2)$ by $r^+$ in $W_k^{+}\otimes\C$.
It is expected that such a characterization as in \cite{KoZa} should exist.\\

Amasingly enough, the coefficient matrix in \eqref{lineq} appears exactly when we look at the imaginary part of the double Eisensten series of level 2, which we are
going to explain.  We look at the imaginary part of $G_{r,k-r}^{\odd\odd}(\tau)$ (as $q$-series) for $r$ even. Let $\pi : \C[[q]] \longrightarrow \sqrt{-1}\R[[q]]$ be the
natural projection to imaginary part (note that by imaginary part we mean the term in $\sqrt{-1}\R[[q]]$, not the coefficient of $\sqrt{-1}$).  As Theorem~2
and Definition~1 (for $r=2$) shows, imaginary parts come from the terms with $\cz^\even(p)g_h^\odd(\tau)$ for odd $p$, and we have in matrix form
\begin{equation}\label{qmat}
\pi \left( \begin{array}{c} G_{2,k-2}^{\odd\odd}(q) \\ G_{4,k-4}^{\odd\odd}(q) \\ \vdots \\ G_{k-2,2}^{\odd\odd}(q) \end{array} \right) = Q_k\left( \begin{array}{c} \cz^{\even} (k-3) g_{3}^{\odd} (q) \\ \cz^{\even} (k-5) g_{5}^{\odd} (q) \\ \vdots \\ \cz^{\even} (3) g_{k-3}^{\odd} (q) \end{array} \right),
\end{equation}
where $Q_k$ is the $(k/2-1)\times (k/2-2)$ matrix given by
\[ Q_k=\left(\binom{2j}{2i-1} -  \binom{2j}{k-2i-1}\right)_{\substack{1\le i\le k/2-1\\ 1\le j\le k/2-2}}. \] 
This is exactly the coefficient matrix of \eqref{lineq}! 

Let $\DE$ be the $\Q$-vector space generated by $G_{r,k-r}^{\odd\odd} \ (r=2,4,\ldots,k-2)$. 

\begin{thm5}  Let $k\ge4$ be a positive even integer.

1) \[ \dim \DE = \frac{k}{2}-1,\]
so that the series $G_{r,k-r}^{\odd\odd}(\tau)$ $(r$ even$)$ are linearly independent over $\Q$.

2)  The space $\DE$ contains $\Q\cdot (2\pi i)^{-k}G_{k}^{(i\infty)}(\tau)\oplus S_k^{\Q} (2)$, 
where $S_k^{\Q} (2)$ is the space of cusp forms
on $\Gamma_0(2)$ having rational Fourier coefficients.
\end{thm5}
\begin{proof} We first prove 2). We know from Theorem~3 and Theorem~1 that the space $\DE$
contains $G_k^\odd(q)=(2\pi i)^{-k}G_{k}^{(i\infty)}(\tau)$, 
as well as $G_r^\odd(q)G_s^\even(q)$ and $G_r^\odd(q)G_s^\odd(q)$ ($r+s=k$).
Because of the relation 
\[ (2\pi i)^{-k}G_{r}^{(0)}(\tau) G_{s}^{(i\infty)}(\tau)=(2^r-1) G_r^\odd(q)G_s^\even(q) -G_r^\odd(q)G_s^\odd(q) 
\quad (q=e^{2\pi i \tau})\]
and the fact shown by Imamo\={g}lu and Kohnen in \cite{ki} that these cusp forms $G_{r}^{(0)}(\tau) G_{s}^{(i\infty)}(\tau)$ generate the space $S_k(2)$, we obtain the assertion 2).

For 1), first we note by definition the inequality
\[  \dim \DE \leq \frac{k}{2}-1.\]
Since elements in $\Q\cdot  (2\pi i)^{-k}G_{k}^{(i\infty)}(\tau)\oplus S_k^{\Q} (2)$ has no imaginary parts, they sit in the 
kernel of the projection $\pi$ from $\DE$ to $\sqrt{-1}\R[[q]]$, thus
\[ \dim\ker\pi\ge1+\dim S_k(2)=\left[\frac{k}{4}\right] . \]
As for the dimension of the image of $\pi$, we see that it is equal to the rank of the matrix $Q_k$
because the series $g_{3}^{\odd} (q), g_{5}^{\odd} (q),\ldots,g_{k-3}^{\odd} (q)$ are linearly
independent over $\C$.  This can be seen as follows. For a prime $p$, the coefficient of $q^p$ in
$g_r^\odd(q)$ is essentially $1+p^{r-1}$. Hence by picking distinct prime numbers
$p_3,p_5,\ldots,p_{k-3}$ and looking at the coefficients of $q^{p_3},q^{p_5},\ldots,q^{p_{k-3}}$
in $g_{3}^{\odd} (q), g_{5}^{\odd} (q),\ldots,g_{k-3}^{\odd} (q)$, we see the desired linear independence
because the coefficient matrix is essentially the Vandermond determinant.
We thus have 
\[ \dim \text{im } \pi =\text{rank} Q_k=\left[\frac{k+2}{4}\right] -1\]
and therefore 
\[  \dim \DE \geq \left[\frac{k}{4}\right]+\left[\frac{k+2}{4}\right] -1=\frac{k}{2}-1.\]
Therefore we conclude 
\[ \dim \DE =\frac{k}{2}-1\] and also
\[ \ker\pi=\Q\cdot  (2\pi i)^{-k}G_{k}^{(i\infty)}(\tau) \oplus S_k^{\Q} (2). \]

\end{proof}

\begin{cor}\label{5} 
 For an even integer $k>2$, we have
\[ \dim \langle \zeta^{\odd\odd} (2r,k-2r) \mid 1\leq r \leq k/2-1 \rangle_{\Q} \leq \frac{k}{2}-1-\dim S_k (2) .\]
\end{cor}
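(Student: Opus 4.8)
The plan is to realize the span in question as a nonzero scalar multiple of the image of the ``constant term'' functional on the double Eisenstein space $\DE$, and then to extract the bound from Theorem~5 by the rank--nullity theorem, the kernel being forced to contain the cusp forms.

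First I would verify that for every even $r$ with $2\le r\le k-2$ the $q$-series $G_{r,k-r}^{\odd\odd}(q)$ has constant term exactly $\cz^{\odd\odd}(r,k-r)=(2\pi i)^{-k}\zeta^{\odd\odd}(r,k-r)$. Indeed, in the decomposition of Definition~1, the part $C_{r,k-r}^{\odd\odd}(q)$ lies in $q\,\Q[[q]]$, and $I_{r,k-r}^{\odd\odd}(q)$ has no constant term either, since every one of its summands carries a factor $g_h^{\odd}(q)\in q\,\Q[[q]]$; in particular, because $r,k-r\ge2$ the $p=1$ summand of $I^{\odd\odd}_{r,k-r}$ --- the only one that could involve the formal variable $T$ --- drops out, as $\binom{0}{s-1}=\binom{0}{r-1}=0$ and $\delta_{1,s}=0$ there. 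This is of course consistent with Theorem~2 in the convergent range, and $\cz^{\odd\odd}(r,k-r)$ is real because $k$ is even. Consequently the constant-term map $c\colon\DE\to\R$ is a well-defined $\Q$-linear map with $c\bigl(G_{r,k-r}^{\odd\odd}(q)\bigr)=(2\pi i)^{-k}\zeta^{\odd\odd}(r,k-r)$, whence $\operatorname{im}c=(2\pi i)^{-k}\langle\zeta^{\odd\odd}(2r,k-2r)\mid 1\le r\le k/2-1\rangle_\Q$, a space of the same $\Q$-dimension as $\langle\zeta^{\odd\odd}(2r,k-2r)\rangle_\Q$.

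It then remains to combine this with Theorem~5. By part~(1), $\dim\DE=k/2-1$; by part~(2), $S_k^{\Q}(2)\subseteq\DE$, and since cusp forms have vanishing constant term at $i\infty$ we get $S_k^{\Q}(2)\subseteq\ker c$; finally $\dim_\Q S_k^{\Q}(2)=\dim_\C S_k(2)$ because $S_k(2)$ has a basis of forms with rational Fourier coefficients. Rank--nullity now yields
\[
\dim\langle\zeta^{\odd\odd}(2r,k-2r)\rangle_\Q=\dim\operatorname{im}c=\dim\DE-\dim\ker c\le\frac k2-1-\dim S_k(2),
\]
as asserted. Equivalently, writing each $f\in S_k^{\Q}(2)$ as $\sum_r c_r\,G_{2r,k-2r}^{\odd\odd}(q)$ with $c_r\in\Q$, its vanishing constant term gives the relation $\sum_r c_r\,\zeta^{\odd\odd}(2r,k-2r)=0$, and these $\dim S_k(2)$ relations are linearly independent because $f\mapsto(c_r)_r$ is injective. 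The only step needing real care is the constant-term verification in the second paragraph; everything else is immediate from Theorem~5, and neither Theorem~1 nor the double shuffle relations of Theorem~3 are used here directly (they enter only through the proof of Theorem~5).
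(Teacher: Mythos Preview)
Your proof is correct and follows essentially the same approach as the paper: define the constant-term map on $\DE$, observe that its image is (a scalar multiple of) the span of the $\zeta^{\odd\odd}(2r,k-2r)$, and use Theorem~5 together with the fact that cusp forms lie in its kernel to get the bound via rank--nullity. Your write-up is simply more explicit than the paper's about why the constant term really is $\cz^{\odd\odd}(r,k-r)$ (no contribution from $C$, $I$, or the variable $T$) and about the equality $\dim_\Q S_k^\Q(2)=\dim_\C S_k(2)$, but the argument is the same.
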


\begin{proof}  By taking the constant term of the $q$-series, we obtain the surjective map 
\[ \mu: \DE\longrightarrow \langle \zeta^{\odd\odd} (2r,k-2r) \mid 1\leq r \leq k/2-1 \rangle_{\Q}. \]
By the theorem, the kernel of $\mu$ contains the space $S_k^{\Q} (2)$ and hence we obtain the
corollary.
\end{proof}

\noindent{\bf Remark.}  The above corollary says that among the $k/2-1$ numbers
$\zeta^{\odd\odd}(even,\,even)$ there are at least  $\dim S_k (2)$ linear relations.  
It seems that the $k/2-1$ numbers $\zeta^{\odd\odd}(odd,\,odd)$ are linearly independent
over $\Q$, and the total space
\[ \langle \zeta^{\odd\odd} (r,k-r) \mid 2\leq r \leq k-1 \rangle_{\Q} \]
is spanned by $\zeta^{\odd\odd}(odd,\,odd)$ and $\zeta(k)$. (Recall the sum formula 
in Theorem 1, so that $\zeta(k)$ is contained in the above space.) 
The conjectural dimension of this space is thus $k/2$.
We also conjecture that the space of usual double zeta values of even weight $k$
is containend in the space spanned by $\zeta^{\odd\odd} (r,k-r)$ except $\zeta^{\odd\odd} (k-1,1)$:
\[ \langle \zeta(r,k-r) \mid 2\leq r \leq k-1 \rangle_{\Q}\subset \langle \zeta^{\odd\odd} (r,k-r) \mid 2\leq r \leq k-2 \rangle_{\Q}, \]
and that the space $\langle \zeta^{\odd\odd} (2r,k-2r) \mid 1\leq r \leq k/2-1 \rangle_{\Q} $
is contained in the usual double zeta space:
\[ \langle \zeta^{\odd\odd} (2r,k-2r) \mid 1\leq r \leq k/2-1 \rangle_{\Q} \subset 
\langle \zeta(r,k-r) \mid 2\leq r \leq k-1 \rangle_{\Q}. 
\]
When $k$ is odd, we can prove that every $\zeta^{\odd\odd} (r,k-r)$ except $\zeta^{\odd\odd} (k-1,1)$
is a linear combination of $\zeta(r,k-r)$:
\[ \langle \zeta^{\odd\odd} (r,k-r) \mid 2\leq r \leq k-2 \rangle_{\Q} \subset 
\langle \zeta(r,k-r) \mid 2\leq r \leq k-1 \rangle_{\Q}. 
\]
To prove this we use the identity of Y.~Komori, K.~Matsumoto, and H.~Tsumura
\begin{align*} &\left(1+(-1)^r\right)\zeta_2(r,s)+\left(1+(-1)^s\right)\zeta_2(s,r)\\
&\quad=\sum_{\substack{i=0\\even}}^{k-3}2^{-k+i+1}\left(\binom{k-i-1}{r-1}+\binom{k-i-1}{s-1}\right)\zeta(i)\zeta(k-i)-\zeta(k),
\end{align*}
valid when $r,s\ge2$ and $r+s=k:$ odd, where \[
\zeta_2(r,s)=\sum_{m,n\ge1}\frac1{(m+2n)^rm^s}=\zeta^{\odd\odd} (r,s)+\zeta^{\even\even} (r,s).\]
For $\zeta^{\odd\odd} (k-1,1)$, it seems we need $(\log2)\zeta(k-1)$ other than usual double zeta values,
but we have not proved this.

\subsection{Proof of Theorem~3}\label{proof}

As in \cite{gkz}, we prove Theorem~3 by dividing it into three parts: the constant term,  the imaginary part,
and the combinatorial part.  The double shuffle relation of the constant term is nothing but that of double zeta values,
namely Proposition~1 and its regularization.
As for  the imaginary part, the assertion is as follows.  

\begin{lem} \label{3_4} For each integer $k>2$, we define generating functions $\I^{\even\odd}_k (X,Y)$,
$\I^{\odd\even}_k (X,Y)$, $\I_k^{\odd\odd} (X,Y)$ by
\begin{align*}
\I^{\even\odd}_k (X,Y) &:= \sum_{r+s=k} I^{\even\odd}_{r,s} X^{r-1} Y^{s-1} , \ \I^{\odd\even}_k (X,Y) := \sum_{r+s=k} I^{\odd\even}_{r,s} X^{r-1} Y^{s-1} ,\\
\I_k^{\odd\odd} (X,Y) &:= \sum_{r+s=k} I^{\odd\odd}_{r,s} X^{r-1} Y^{s-1}.
\end{align*}
Then we have
\begin{align*}
\sum_{\begin{subarray}{c} p+h=k \\ p:odd \end{subarray} }  \left( X^{h-1} Y^{p-1}+X^{p-1} Y^{h-1} \right)  \cz^{\odd} (p) \overline{g}_h^{\odd} (q)  &= \I^{\odd\odd}_k (X,Y) +\I^{\odd\odd}_k (Y,X)\\
&= \I^{\even\odd}_k (X+Y,X)+\I^{\even\odd}_k (X+Y,Y) ,
\end{align*}
\begin{align*}
\sum_{\begin{subarray}{c} p+h=k \\ p:odd \end{subarray} } \left( X^{h-1} Y^{p-1} \cz^{\even} (p) g_h^{\odd} (q) +X^{p-1} Y^{h-1}  \cz^{\odd} (p) \overline{g}_h^{\even} (q) \right) & =\I^{\odd\even}_k (X,Y) +\I^{\even\odd}_k (Y,X)\\
&= \I^{\odd\even}_k (X+Y,X)+\I^{\odd\odd}_k (X+Y,Y) .
\end{align*}
\end{lem}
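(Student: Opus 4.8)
This lemma is the ``imaginary-part'' component of the two double shuffle relations of Theorem~3. Recall that each series $G^{\ast\ast}_{r,s}(q)$ of Definition~1 is assembled from a constant term, a combinatorial part, and an imaginary part $I^{\ast\ast}_{r,s}(q)\in\sqrt{-1}\,\R[T][[q]]$; Theorem~3 is proved by establishing the two identities separately on each of the three pieces, the constant-term piece being Proposition~1 together with its regularization. The plan is therefore to verify the two displayed chains of equalities directly, reading them as identities between polynomials in $X,Y$ homogeneous of degree $k-2$ whose coefficients lie in $\sqrt{-1}\,\R[T][[q]]$.

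The first step is to put each of $\I^{\odd\odd}_k$, $\I^{\odd\even}_k$, $\I^{\even\odd}_k$ into closed form. Substituting the definitions of $I^{\ast\ast}_{r,s}$ into $\sum_{r+s=k}I^{\ast\ast}_{r,s}(q)X^{r-1}Y^{s-1}$, the two inner binomial sums $\sum_{r+s=k}(-1)^{s}\binom{p-1}{s-1}X^{r-1}Y^{s-1}$ and $\sum_{r+s=k}(-1)^{p+r}\binom{p-1}{r-1}X^{r-1}Y^{s-1}$ collapse, by the binomial theorem, to $-X^{k-1-p}(X-Y)^{p-1}$ and (for odd $p$) $Y^{k-1-p}(X-Y)^{p-1}$. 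After this reduction each $\I^{\ast\ast}_k(X,Y)$ is an explicit sum over odd $p$ of the three basic coefficients $\cz^{\even}(p)g^{\odd}_{k-p}(q)$, $\cz^{\odd}(p)g^{\even}_{k-p}(q)$, $\cz^{\odd}(p)g^{\odd}_{k-p}(q)$, each multiplied by a small combination of $X^{k-1-p}$, $Y^{k-1-p}$, $(X-Y)^{p-1}$, together with the Kronecker-delta contributions $\cz^{\ast}(p)g^{\ast}_{k-p}(q)X^{k-1-p}Y^{p-1}$.

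For the ``stuffle'' equality in the middle I would form $\I^{\odd\odd}_k(X,Y)+\I^{\odd\odd}_k(Y,X)$: since $p$ is odd, $(X-Y)^{p-1}$ is symmetric under $X\leftrightarrow Y$, so the $\cz^{\even}(p)$-terms carry the antisymmetric factor $X^{k-1-p}-Y^{k-1-p}$ and cancel pairwise, leaving only the delta-terms. Analogously, in $\I^{\odd\even}_k(X,Y)+\I^{\even\odd}_k(Y,X)$ the two contributions proportional to $(X-Y)^{p-1}$ cancel between the two generating functions. For the ``shuffle'' equality I would substitute $X\mapsto X+Y$ in these closed forms, so that $(X+Y)-X=Y$ and $(X+Y)-Y=X$ turn the $(X-Y)$-powers into pure monomials; summing $\I^{\even\odd}_k(X+Y,X)+\I^{\even\odd}_k(X+Y,Y)$, and likewise the mixed combination, a telescoping among the $(X+Y)^{k-1-p}$ terms then collapses everything. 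Both routes should land on one and the same explicit sum over odd $p$, which is what the statement records.

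The main obstacle is matching that explicit sum against the left-hand sides as written. The $q$-series produced by the pure imaginary parts are the unbarred $g^{\ast}_{k-p}(q)$ (they come straight out of the Lipschitz/Fourier computation of Theorem~2), whereas the statement involves the barred series $\overline g^{\ast}_h(q)$; reconciling the two is where all the ``boundary'' bookkeeping is concentrated: the correction terms $\tfrac14(\delta_{r,2}\overline g_s+\delta_{s,2}\overline g_r)$ appearing in Theorem~3, the regularized values \eqref{regval} of $\cz^{\odd}(1)$ and $\cz^{\even}(1)$ (which carry the formal variable $T$), and the identity $\overline g^{\ast}_h(q)=\tfrac1{2h}\bigl(q\tfrac{d}{dq}\bigr)g^{\ast}_h(q)$ for $h\ge 1$ together with its separate $h=0$ analog. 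Beyond this, the work is entirely a matter of keeping track of the sign factors $(-1)^{s}$ and $(-1)^{p+r}$ through the binomial collapses; the overall architecture of the computation is identical to the level-$1$ case treated in \cite{gkz}, to which one can refer for the shape of the argument.
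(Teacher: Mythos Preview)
Your plan is exactly the paper's: write each $\I^{\ast\ast}_k(X,Y)$ in closed form by collapsing the binomial sums, then verify the two chains by direct substitution. The closed forms you describe are correct and coincide with the three displayed expressions in the paper's own proof; from there the middle and right members of each chain follow by the cancellations you indicate (antisymmetry of $X^{h-1}-Y^{h-1}$ against the symmetry of $(X-Y)^{p-1}$ for odd $p$, and the reduction $(X+Y)-X=Y$, $(X+Y)-Y=X$).

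Where you go off the rails is the final paragraph. There is nothing to ``reconcile'': the appearance of $\overline g_h^{\,\odd}$ and $\overline g_h^{\,\even}$ in the left-hand sides of the lemma is a typographical slip for the unbarred $g_h^{\,\odd}$ and $g_h^{\,\even}$. Indeed, from the closed forms one finds
\[
\I^{\odd\odd}_k(X,Y)+\I^{\odd\odd}_k(Y,X)=\sum_{\substack{p+h=k\\ p:\text{odd}}}\bigl(X^{h-1}Y^{p-1}+X^{p-1}Y^{h-1}\bigr)\,\cz^{\odd}(p)\,g_h^{\odd}(q),
\]
and similarly for the second chain with $g_h^{\odd}$ and $g_h^{\even}$. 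This is also what the imaginary projection of Theorem~3 demands: the extra terms $\tfrac14(\delta_{r,2}\overline g_s+\delta_{s,2}\overline g_r)$ there lie in $q\Q[[q]]$ and are annihilated by $\pi$, so they never enter the imaginary-part identity. Your proposed reconciliation via $\overline g_h=\tfrac1{2h}\,q\tfrac{d}{dq}g_h$, regularized values, and boundary corrections cannot succeed, since $g_h$ and $\overline g_h$ are genuinely different $q$-series; drop that paragraph and the proof is complete.
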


\begin{proof}
By definition, each generating function can be given as
\begin{align*}
\I^{\even\odd}_k(X,Y)= &\sum_{\begin{subarray}{c} p+h=k \\ p:odd \end{subarray}}  \left( X^{h-1}Y^{p-1} - X^{h-1} (X-Y)^{p-1} \right) \cz^{\odd} (p) g_h^{\even} (q) \\
 &+ \sum_{\begin{subarray}{c} p+h=k \\ p:odd \end{subarray}}   Y^{h-1} (X-Y)^{p-1}  \cz^{\odd} (p) g_h^{\odd} (q) ,\\
  \I^{\odd\even}_k(X,Y)=&  \sum_{\begin{subarray}{c} p+h=k \\ p:odd \end{subarray}} X^{h-1}Y^{p-1} \cz^{\even} (p) g_h^{\odd} (q) - \sum_{\begin{subarray}{c} p+h=k \\  p:odd \end{subarray}}  X^{h-1} (X-Y)^{p-1} \cz^{\odd} (p) g_h^{\odd} (q) \\
 &+  \sum_{\begin{subarray}{c} p+h=k \\ p:odd \end{subarray}} Y^{h-1}(Y-X)^{p-1} \cz^{\odd} (p) g_h^{\even} (q), \\
 \I^{\odd\odd}_k(X,Y)=&  \sum_{\begin{subarray}{c} p+h=k \\ p:odd \end{subarray}} \left( Y^{h-1}(Y-X)^{p-1} - X^{h-1} (Y-X)^{p-1} \right) \cz^{\even} (p) g_h^{\odd} (q) \\
\label{e_3_10} &+ \sum_{\begin{subarray}{c} p+h=k \\ p:odd \end{subarray}} X^{h-1}Y^{p-1} \cz^{\odd} (p) g_h^{\odd} (q) .
\end{align*}
The lemma follows from these by a simple calculation using binomial theorem and we omit the details.

\end{proof}

For the computation of the combinatorial part, we prepare the generating functions as follows.
\begin{align*}
\beta (X)&:= \sum_{p>0} \beta_p X^{p-1} =\frac{1}{2} \left( \frac{1}{X} - \frac{1}{e^X-1} \right), \\
\beta^{\even} (X) &:= \sum_{p>0} \beta_p^{\even} X^{p-1} =\frac{1}{4} \left( \frac{2}{X} - \frac{1}{e^{\frac{X}{2}}-1} \right), \\
\beta^{\odd} (X) &:= \sum_{p>0} \beta_p^{\odd} X^{p-1} = \frac{1}{4} \frac{1}{e^{\frac{X}{2}}+1},
\end{align*}
\begin{align*}
g^{\even}(X) &:= \sum_{p>0} g_p^{\even} (\tau) X^{p-1} = -\frac{1}{2} \sum_{u>0} e^{\frac{-uX}{2}}\cdot \frac{q^u}{1-q^u} ,\\
g^{\odd}(X) &:= \sum_{p>0} g_p^{\odd} (\tau) X^{p-1} = -\frac{1}{2} \sum_{u>0} (-1)^u e^{\frac{-uX}{2}} \cdot \frac{q^u}{1-q^u} ,\\
\overline{g}^{\even}(X) &:= \sum_{p>0} \overline{g}_p^{\even} (\tau) X^{p-1} = \frac{1}{2X}  \left( \sum_{u>0}  e^{\frac{-uX}{2}}\cdot \frac{q^u}{(1-q^u)^2} -4 g_2^{\even} (\tau) \right) ,\\
\overline{g}^{\odd}(X) &:= \sum_{p>0} \overline{g}_p^{\odd} (\tau) X^{p-1} = \frac{1}{2X} \left( \sum_{u>0} (-1)^u e^{\frac{-uX}{2}}\cdot \frac{q^u}{(1-q^u)^2} -4g_2^{\odd} (\tau) \right) ,
\end{align*}
\begin{align*}
g^{\even\odd} (X,Y) &:= \sum_{r,s\geq1} g_{r,s}^{\even\odd} (\tau) X^{r-1} Y^{s-1} = \frac{1}{4} \sum_{u,v>0} (-1)^v e^{-\frac{uX+vY}{2} }\cdot \frac{q^u}{1-q^u}\cdot \frac{q^{u+v}}{1-q^{u+v}}, \\
g^{\odd\even} (X,Y) &:= \sum_{r,s\geq1} g_{r,s}^{\odd\even} (\tau) X^{r-1} Y^{s-1} = \frac{1}{4} \sum_{u,v>0} (-1)^u e^{-\frac{uX+vY}{2} }\cdot \frac{q^u}{1-q^u}\cdot \frac{q^{u+v}}{1-q^{u+v}}, \\
g^{\odd\odd} (X,Y) &:= \sum_{r,s\geq1} g_{r,s}^{\odd\odd} (\tau) X^{r-1} Y^{s-1} = \frac{1}{4} \sum_{u,v>0} (-1)^{u+v} e^{-\frac{uX+vY}{2} }\cdot \frac{q^u}{1-q^u}\cdot \frac{q^{u+v}}{1-q^{u+v}}, 
\end{align*}
\begin{align*}
\beta^{\even \odd} (X,Y) &:= \sum_{r,s\geq 1} \beta_{r,s}^{\even\odd} (\tau) X^{r-1} Y^{s-1} =\beta^{\odd} (Y) g^{\even} (X) - \beta^{\odd} (X-Y) (g^{\even} (X)-g^{\odd} (Y) ), \\ 
\beta^{\odd \even} (X,Y) &:= \sum_{r,s\geq 1} \beta_{r,s}^{\odd\even} (\tau) X^{r-1} Y^{s-1} =\beta^{\even} (Y) g^{\odd} (X) - \beta^{\odd} (X-Y) (g^{\odd} (X)-g^{\even} (Y) ), \\ 
\beta^{\odd \odd} (X,Y) &:= \sum_{r,s\geq 1} \beta_{r,s}^{\odd\odd} (\tau) X^{r-1} Y^{s-1} =\beta^{\odd} (Y) g^{\odd} (X) - \beta^{\even} (X-Y) (g^{\odd} (X)-g^{\odd} (Y) ), 
\end{align*}
\begin{align*} 
\varepsilon^{\even\odd} (X,Y) &:=\sum_{r,s\geq1} \varepsilon_{r,s}^{\even\odd} (\tau) X^{r-1} Y^{s-1} \\
&= X\overline{g}^{\odd} (Y) -Y\overline{g}^{\odd} (Y) -\overline{g}_0^{\odd} (\tau) +X\overline{g}^{\even} (X) + \overline{g}_0^{\even} (\tau) +g^{\even} (X) +\alpha_1, \\
\varepsilon^{\odd\even} (X,Y) &:=\sum_{r,s\geq1} \varepsilon_{r,s}^{\odd\even} (\tau) X^{r-1} Y^{s-1} \\
&= X\overline{g}^{\even} (Y) -Y\overline{g}^{\even} (Y) -\overline{g}_0^{\even} (\tau) +X\overline{g}^{\odd} (X) + \overline{g}_0^{\odd} (\tau) +g^{\odd} (X) +\alpha_2, \\
\varepsilon^{\odd\odd} (X,Y) &:=\sum_{r,s\geq1} \varepsilon_{r,s}^{\odd\odd} (\tau) X^{r-1} Y^{s-1} = X\overline{g}^{\odd} (Y) -Y\overline{g}^{\odd} (Y) +X\overline{g}^{\odd} (X) +g^{\odd} (X) +\alpha_3,
\end{align*}
where $\alpha_1,\alpha_2$ and $\alpha_3$ are defined as in \eqref{e_3_8}. Let
\begin{align*} \CC^{\even\odd} (X,Y) &:=\sum_{r,s\geq1} C_{r,s}^{\even\odd} X^{r-1} Y^{s-1},\\
\CC^{\odd\even} (X,Y)& :=\sum_{r,s\geq1} C_{r,s}^{\odd\even} X^{r-1} Y^{s-1},\\
\CC^{\odd\odd} (X,Y) &:=\sum_{r,s\geq1} C_{r,s}^{\odd\odd} X^{r-1} Y^{s-1} .
\end{align*}
Then by definition we have
\begin{align*} \CC^{\even\odd}  (X,Y) &= g^{\even \odd} (X,Y) + \beta^{\even\odd} (X,Y) +\frac{1}{4} \varepsilon^{\even\odd} (X,Y) ,\\
\CC^{\odd\even} (X,Y) &= g^{\odd \even} (X,Y) + \beta^{\odd\even} (X,Y) +\frac{1}{4} \varepsilon^{\odd\even} (X,Y), \\
\CC^{\odd\odd}  (X,Y) &= g^{\odd \odd} (X,Y) + \beta^{\odd\odd} (X,Y) +\frac{1}{4} \varepsilon^{\odd\odd} (X,Y) .
\end{align*}
The double shuffle relations of the combinatorial double Eisenstein series are stated as
\begin{lem}\label{3_5} Put
\begin{align*}
\QQ^{\odd\even} (X,Y) &:= g^{\odd} (X) g^{\even} (Y) +\beta^{\odd} (X) g^{\even} (Y) +\beta^{\even} (Y) g^{\odd} (X) +\frac{1}{4} (X \overline{g}^{\even} (Y) + Y \overline{g}^{\odd} (X)), \\
\QQ^{\odd\odd} (X,Y) &:= g^{\odd} (X) g^{\odd} (Y) +\beta^{\odd} (X) g^{\odd} (Y) +\beta^{\odd} (Y) g^{\odd} (X) +\frac{1}{4} (X \overline{g}^{\odd} (Y) + Y \overline{g}^{\odd} (X)) ,\\
\CC^\odd (X) &:= g^{\odd} (X) -\frac{\alpha_3}{2} \cdot X.
\end{align*}
Then we have
\begin{align*}
\QQ^{\odd\even} (X,Y) &= \CC^{\odd\even} (X,Y) + \CC^{\even\odd} (Y,X)  = \CC^{\odd\even} (X+Y,Y)+\CC^{\odd\odd}  (X+Y,X),\\
\QQ^{\odd\odd}(X,Y) &= \CC^{\odd\odd}  (X,Y) + \CC^{\odd\odd}  (Y,X) +\frac{\CC^\odd (X) -\CC^\odd (Y) }{X-Y} = \CC^{\even\odd}  (X+Y,X)+\CC^{\even\odd}  (X+Y,Y).
\end{align*}
\end{lem}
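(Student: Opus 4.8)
The six generating functions $\CC^{\ast\ast}(X,Y)$, $\QQ^{\ast\ast}(X,Y)$ and $\CC^\odd(X)$ occurring in the lemma are, by their definitions, assembled from the ``atomic'' generating functions $g^\ast,\ g^{\ast\ast},\ \beta^\ast,\ \beta^{\ast\ast},\ \overline{g}^\ast,\ \varepsilon^{\ast\ast}$ whose closed forms were just recorded. The plan is to substitute those closed forms and verify, as identities of formal power series in $X,Y$ over $\Q[T][[q]]$, each of the four asserted equalities: the two ``stuffle'' identities $\QQ^{\odd\even}=\CC^{\odd\even}(X,Y)+\CC^{\even\odd}(Y,X)$ and $\QQ^{\odd\odd}=\CC^{\odd\odd}(X,Y)+\CC^{\odd\odd}(Y,X)+\tfrac{\CC^\odd(X)-\CC^\odd(Y)}{X-Y}$, and the two ``shuffle'' identities $\QQ^{\odd\even}=\CC^{\odd\even}(X+Y,Y)+\CC^{\odd\odd}(X+Y,X)$ and $\QQ^{\odd\odd}=\CC^{\even\odd}(X+Y,X)+\CC^{\even\odd}(X+Y,Y)$. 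For each of them I would split both sides into the $g$-part, the $\beta$-part and the $\varepsilon$-part, match the principal terms, and then check that the remaining ``boundary'' terms cancel.

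For the stuffle identities the single computational input is the elementary partial-fraction identity
\[ \frac{q^u}{1-q^u}\cdot\frac{q^v}{1-q^v}=\frac{q^{u+v}}{1-q^{u+v}}\left(\frac{q^u}{1-q^u}+\frac{q^v}{1-q^v}+1\right).\]
Inserting it into the double sum defining $g^\odd(X)g^\even(Y)$ (and, in the $\QQ^{\odd\odd}$ case, $g^\odd(X)g^\odd(Y)$), the first two summands reproduce $g^{\odd\even}(X,Y)+g^{\even\odd}(Y,X)$ (resp. $g^{\odd\odd}(X,Y)+g^{\odd\odd}(Y,X)$), while the ``$+1$'' leaves a residual $\tfrac14\sum_{u,v>0}(-1)^{u}e^{-(uX+vY)/2}\tfrac{q^{u+v}}{1-q^{u+v}}$ (resp. the same with $(-1)^{u+v}$). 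Gathering this residual over $u+v=w$ and summing the finite geometric series in $e^{-(X-Y)/2}$, the $\odd\even$ residual collapses — using $\tfrac1{e^{\pm(X-Y)/2}+1}=4\beta^\odd(\pm(X-Y))$ together with $\beta^\odd(W)+\beta^\odd(-W)=\tfrac14$ — to $2\beta^\odd(X-Y)g^\even(Y)+2\beta^\odd(Y-X)g^\odd(X)$, whereas the $\odd\odd$ residual acquires an apparent pole at $X=Y$ (the geometric series now involves $\tfrac1{1-e^{-(X-Y)/2}}$). On the other hand the $\beta$-part, unfolded through $\beta^{\even\odd}(X,Y)=\beta^\odd(Y)g^\even(X)-\beta^\odd(X-Y)(g^\even(X)-g^\odd(Y))$ and its siblings, yields the $\beta^\ast g^\ast$-terms of $\QQ^{\ast\ast}$ plus leftovers governed by $(g^\odd(X)-g^\even(Y))(\beta^\odd(X-Y)-\beta^\odd(Y-X))$ in the $\odd\even$ case and by the $\tfrac1{X-Y}$-containing difference $\beta^\even(X-Y)-\beta^\even(Y-X)$ in the $\odd\odd$ case, and the $\varepsilon$-part — after inserting $\alpha_2=-\alpha_1$ and $\alpha_3=4g_2^\odd+\tfrac12\overline{g}_0^\even$ — telescopes to $\tfrac14(X\overline{g}^\even(Y)+Y\overline{g}^\odd(X))+\tfrac14(g^\odd(X)+g^\even(Y))$ (and its analogue). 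Summing the three parts, all leftovers cancel by $\beta^\odd(W)+\beta^\odd(-W)=\tfrac14$ and $\beta^\even(W)+\beta^\even(-W)=\tfrac14$; in the $\odd\odd$ case $\CC^{\odd\odd}(X,Y)+\CC^{\odd\odd}(Y,X)$ comes out equal to $\QQ^{\odd\odd}(X,Y)-\tfrac{g^\odd(X)-g^\odd(Y)}{X-Y}+\tfrac{\alpha_3}{2}$, which is precisely compensated by the extra term $\tfrac{\CC^\odd(X)-\CC^\odd(Y)}{X-Y}=\tfrac{g^\odd(X)-g^\odd(Y)}{X-Y}-\tfrac{\alpha_3}{2}$ on the left — the exact analogue of how $\ZZ^\odd_k\cdot\tfrac{X^{k-1}-Y^{k-1}}{X-Y}$ enters \eqref{e_2_2}.

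For the shuffle identities one instead re-indexes the double sums by $(u,v)\mapsto(u,u+v)$: the substitution $X\mapsto X+Y$ turns the product $\tfrac{q^u}{1-q^u}\cdot\tfrac{q^{u+v}}{1-q^{u+v}}$ appearing in $g^{\odd\even}(X,Y)$ and $g^{\odd\odd}(X,Y)$ into $\tfrac{q^a}{1-q^a}\cdot\tfrac{q^b}{1-q^b}$ with $a\ne b$, the two summands together ranging over all ordered pairs of distinct positive integers; hence $g^{\odd\even}(X+Y,Y)+g^{\odd\odd}(X+Y,X)$ equals $g^\odd(X)g^\even(Y)$ minus the diagonal $\tfrac14\sum_{u>0}(-1)^u e^{-u(X+Y)/2}\bigl(\tfrac{q^u}{1-q^u}\bigr)^2$, which via $\bigl(\tfrac{q^u}{1-q^u}\bigr)^2=\tfrac{q^u}{(1-q^u)^2}-\tfrac{q^u}{1-q^u}$ rewrites through $\overline{g}^\odd(X+Y)$, $g^\odd(X+Y)$ and the constant $g_2^\odd$. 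On this side the $\beta$-part telescopes cleanly, with no leftover, to the $\beta^\ast g^\ast$-terms of $\QQ^{\ast\ast}$, and the $\varepsilon$-part supplies both the $\tfrac14(X\overline{g}^\even(Y)+Y\overline{g}^\odd(X))$-terms of $\QQ^{\ast\ast}$ and the extra $\tfrac{X+Y}{2}\overline{g}^\odd(X+Y)+\tfrac12 g^\odd(X+Y)+g_2^\odd$ needed to cancel the diagonal — here it is exactly the definitions of $\alpha_1,\alpha_2,\alpha_3$ that make the $\overline{g}_0^\ast$ and $g_2^\ast$ constants match up. The $\odd\odd$ shuffle identity runs in the same way, with $\overline{g}^\odd$ and $\overline{g}^\even$ interchanged and the sign $(-1)^u$ replaced by $(-1)^{2u}=1$.

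The main obstacle is not any individual step but the bookkeeping of the correction generating functions $\varepsilon^{\ast\ast}$ and of the constants $\alpha_1,\alpha_2,\alpha_3$: these encode exactly the $\delta_{r,1},\delta_{r,2},\delta_{s,1}$ ``divergent-range'' corrections built into the definition of $C^{\ast\ast}_{r,s}$, and it is their interaction with the single-$g$, $\overline{g}_0^\ast$ and $g_2^\ast$ leftovers coming out of the $g$- and $\beta$-parts that must be tracked term by term. Accordingly I would carry out the $\odd\even$ stuffle identity in full as the model case, and for the remaining three — in particular the $\odd\odd$ stuffle identity, which is slightly more delicate because of the pole at $X=Y$ — merely indicate the parallel manipulations and the cancellations they produce.
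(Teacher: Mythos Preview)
Your proposal is correct and follows exactly the approach the paper intends: the paper's own proof consists solely of the sentence ``Computations are parallel to those in \cite{gkz}, though tedious, and we omit the details,'' and what you have written is a faithful outline of precisely those omitted computations --- the partial-fraction identity for the stuffle side, the re-indexing $(u,v)\mapsto(u,u+v)$ for the shuffle side, and the term-by-term bookkeeping of the $\varepsilon^{\ast\ast}$ corrections and the constants $\alpha_1,\alpha_2,\alpha_3$. In fact you supply considerably more detail than the paper does.
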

\begin{proof}
Computations are parallel to those in \cite{gkz}, though tedious, and we omit the details.
\end{proof}
The two lemmas and Proposition 1 complete the proof of Theorem 3. 

\appendix
\section{The double Eisenstein series and the period polynomials in the case of $\SL$}\label{appendix}

In this appendix we briefly recall the relation described in \cite{kaneko} between the double Eisenstein series and
modular forms for $\SL$.  

The double Eisenstein series for $\SL$ was first defined and studied in \cite{gkz}:
\[ G_{r,s} (\tau) := (2\pi i)^{-r-s}\!\!\!\sum_{\begin{subarray}{c} \lambda > \mu>0 \\ \lambda, \mu \in \Z\cdot\tau + \Z \end{subarray}} 
\frac{1}{\lambda^r \mu^s} =(2\pi i)^{-r-s}\!\!\!\!\!\sum_{\begin{subarray}{c} m\tau+n > m'\tau+n' >0 \\ m, n, m', n' \in \Z \end{subarray}} 
\frac{1}{(m\tau+n)^r (m'\tau+n')^s}.\]
Its Fourier series is given there as 
\[ G_{r,s}(\tau) =  \cz(r,s) + g_{r,s} (q) 
+\sum_{ \begin{subarray}{c} p+h=k \\ p>1 \end{subarray} }  \left((-1)^s \binom{p-1}{s-1} +(-1)^{p+r} \binom{p-1}{r-1}+\delta_{p,s}\right) \cz(p) g_h(q) , \]
where $\cz(r,s) =(2\pi i)^{-r-s} \zeta(r,s), \cz(p)=(2\pi i)^{-p}\zeta(p)$, and
\begin{align*}
g_{r,s}(q) &= \frac{(-1)^{r+s}}{(r-1)!(s-1)!} \sum_{\begin{subarray}{c} m>n>0 \\ u,v>0 \end{subarray} } u^{r-1} v^{s-1} q^{um+vn},\\
g_h(q)&= \frac{(-1)^h}{(h-1)!}\sum_{u,m>0} u^{h-1} q^{um}.
\end{align*} 
By extending the definition in the case of  
non-absolute convergence using $q$-series, we showed that the double Eisenstein series satisfy the double shuffle
relations (in the form described in \cite{gkz}), that the space of double Eisenstein series contains the space of modular forms on $\SL$, and made
a connection to the period polynomial by looking at the imaginary parts of the $q$-expansions of $G_{r,s}(\tau)$.
Specifically, the imaginary parts are given, like \eqref{qmat}, by
\begin{equation*}
\pi \left( \begin{array}{c} G_{2,k-2}(\tau) \\ G_{3,k-3}(\tau) \\ \vdots \\ G_{k-2,2}(\tau) \end{array} \right) = Q_k^{(1)}\left( \begin{array}{c} \cz^ (k-3) g_{3}(q) \\ \cz (k-5) g_{5} (q) \\ \vdots \\ \cz(3) g_{k-3} (q) \end{array} \right),
\end{equation*}
where $Q_k^{(1)}$ is the $(k-3)\times (k/2-2)$ matrix given by
\[ Q_k^{(1)}=\left((-1)^i\binom{2j}{i} - (-1)^i \binom{2j}{k-2-i}+\delta_{k-2-i,2j}\right)_{\substack{1\le i\le k-3\\ 1\le j\le k/2-2}}. \] 
Rather surprisingly, this contains exactly $Q_k$ as a minor.
For example, 
\[ Q_{12}^{(1)} = \left( \begin{matrix} -2 & -4 & -6 & -8 \\ 1&6&15&28\\0&-4&-20&-48\\0&1&15&42\\0&0&0&0\\0&0&-14&-42\\0&4&20&48\\0&-6&-15&-27\\2&4&6&8 \end{matrix} \right),\ \ 
Q_{12} = \left( \begin{array}{cccc} -2&-4&-6&-8\\0&-4&-20&-48\\0&0&0&0\\ 0&4&20&48\\2&4&6&8  \end{array} \right). \]
Precisely, the $i$-th row of $Q_k$ is the $2i-1$-st row of $Q_k^{(1)}$. 

The right kernel of $Q_{k}^{(1)}$ corresponds to the even period polynomials (without constant term) of weight $k$
for $\SL$, an example being $\tr (1,-3,3,-1)$ in the right kernel of $Q_{12}^{(1)}$ and the corresponding period polynomial
$X^8-3X^6+3X^4-X^2$ of weight $12$.  As in the corollary of Theorem~5, by looking at the constant
term of the double Eisenstein series and by using the connection to the period polynomial just mentioned, we 
obtain the upper bound of the dimension of the space of double zeta values:
\[ \dim \langle \zeta(r,k-r) \mid 2\leq r \leq k-1 \rangle_{\Q} \leq \frac{k}{2}-1-\dim S_k (1) .\]

Also,  elements in the left kernel of $Q_{k}^{(1)}$ produce expressions of modular forms 
in terms of double Eisenstein series. By comparing the Fourier coefficients, we obtain 
certain formulas for Fourier coefficients of modular forms. Let us look at some examples in weight $12$.

As the simplest example, take $(0,0,0,0,1,0,0,0,0)$ in 
the left kernel of $Q_{12}^{(1)}$. This corresponds to the relation 
\[   
2^7\cdot3\cdot5^2\cdot691\,G_{6,6}(\tau)=2^9\cdot3^2\cdot5^2\,{\widetilde G}_{12}(\tau)-\Delta(\tau),
\]
where $\Delta (\tau) = q\Pi_{n>0} (1-q^n)^{24} = \sum_{n>0} \tau(n) q^n$ is the famous cusp form of weight $12$. 
Comparing the coefficients of both sides, we obtain
\[ \tau(n)= \frac{2}{693} \sigma_{11} (n) + \frac{691}{2^2\cdot 3^2 \cdot 7} \sigma_5 (n) - \frac{691}{2^2\cdot 3^2} \sigma_3 (n) + \frac{5\cdot 691}{2\cdot 3^2\cdot 11} \sigma_1(n) -\frac{2\cdot 691} {3} \rho_{5,5} (n), \]
where 
\[ \rho_{k,l} (n) := \sum_{\begin{subarray}{c} a+b=n\\ a,b>0 \end{subarray} } \sum_{\begin{subarray}{c} u|a,v|b \\ \frac{a}{u}>\frac{b}{v} \end{subarray} } u^k v^l . \]
Incidentally, the Ramanujan congruence 
\[ \tau(n) \equiv \sigma_{11} (n) \pmod{691} \]
is clearly seen  from this.
Secondly take $(0,0,7,28,0,20,0,0,0)$, which gives the relation  
\[2^7\cdot3^2\cdot5\cdot7\cdot691G_{4,8}(\tau)+2^9\cdot3^2\cdot5\cdot7\cdot691G_{5,7}(\tau)+
2^9\cdot3^2\cdot5^2\cdot691G_{7,5}(\tau)=2^5\cdot3^3\cdot5\cdot11\cdot149{\widetilde G}_{12}(\tau)
-\Delta(\tau) \]
and the formula
\begin{align*} \tau(n)&=\frac{149}{840}\sigma_{11}(n)
-\frac{691}{180}\sigma_7(n)
-\frac{11747}{126}\sigma_5(n)
 +\frac{173441}{360}\sigma_3(n)-\frac{3455}{9}
\sigma_1(n)\\
&\quad  -\frac{2764}3\rho_{3,7}(n)-\frac{19348}3\rho_{4,6}(n)
-\frac{13820}3\rho_{6,4}(n).
\end{align*}
As the third example, take $(0,0,0,168,0,150,0,28,0)$. This corresponds to the relation (8) in \cite{gkz}, and
gives
\[2^9\cdot3\cdot5\cdot7\cdot691G_{5,7}(\tau)+2^7\cdot3\cdot5^3\cdot691G_{7,5}(\tau)+
2^8\cdot5\cdot7\cdot691G_{9,3}(\tau)=2^6\cdot3^3\cdot5\cdot191{\widetilde G}_{12}(\tau)
-\Delta(\tau) \]
and 
\begin{align*} \tau(n)&=\frac{5197}{124740}\sigma_{11}(n)
+\frac{691}{270}\sigma_7(n)
-\frac{129217}{2268}\sigma_5(n)
 +\frac{57353}{270}\sigma_3(n) -\frac{3455}{22}
\sigma_1(n)\\
 &\quad-\frac{19348}9\rho_{4,6}(n)-\frac{17275}9\rho_{6,4}(n)
-\frac{691}9\rho_{8,2}(n).
\end{align*}

We may take yet other vectors in the left kernel of $Q_{12}^{(1)}$ (the dimension is $6$) and
may deduce similar kind of formulas for $\tau(n)$.\\

\noindent{\bf Remark.}  Interestingly enough, the matrix $Q_{k}^{(1)}$ appears when we write 
``motivic'' double zeta values in terms of certain basis elements $f_3,\,f_5,\ldots$ using coproduct structure
described in F.~Brown's recent important papers \cite{B1,B2}. One of the present authors has found 
the same relation between triple Eisenstein series and motivic triple zeta values.  
Or a variant (minor matrix) of $Q_{k}^{(1)}$
appears in the work of S.~Baumard and L.~Schneps \cite{sl} on a relation of double zeta values and 
period polynomials.  More precisely, the matrix $Q_{k}^{(1)}\setminus Q_k$ (we view a matrix as a union of
row vectors and take the difference of sets) is the matrix $A$ in \cite{sl}. The vector $(0,0,0,168,0,150,0,28,0)$ above
is essentially the unique vector $(0,168,150,28)$ in the kernel of ${}^t\!A$.




\begin{thebibliography}{99}

\bibitem{ap} T.~M.~Apostol, {\itshape Modular functions and Dirichlet series in number theory}, Second edition. Graduate Texts in Mathematics, 41. Springer-Verlag, New York, 1990.

\bibitem{ak2} T.~Arakawa and M.~Kaneko, {\itshape Multiple L-values}, J. Math. Soc. Japan, 56-4 (2004), 967--991.

\bibitem{sl} Baumard,~S. and Schneps,~L., {\itshape Period polynomial relations between double zeta values}, preprint.

\bibitem{B1} F.~Brown, {\itshape Mixed Tate motives over $\Z$}, preprint (2010).

\bibitem{B2} F.~Brown, {\itshape On the decompositionof motivic multiple zeta values}, preprint (2010).

\bibitem{fy} S.~Fukuhara and Y.~Yang, {\itshape Period polynomials and explicit formulas for Hecke operators on $\Gamma_0(2)$}, Mathematical Proceedings of the Cambridge Philosophical Society vol.146(2) (2009), 321--350.

\bibitem{gkz} H.~Gangl, M.~Kaneko, and D.~Zagier, {\itshape Double zeta values and modular forms}, Automorphic forms and Zeta functions'', Proceedings of the conference in memory of Tsuneo Arakawa, World Scientific, (2006), 71--106. 

\bibitem{ki} \"O.~Imamo\={g}lu and W.~Kohnen, {\itshape Representations of integers as sums of an even number of squares}, Math. Ann., 333(4), 2005, 815--829.

\bibitem{kaneko} M.~Kaneko, {\itshape Double zeta values and modular forms}, proceedings of the Japan- Korea joint seminar on Number Theory (Kuju, Japan) (H. K. Kim and Y. Taguchi, eds.), October 2004.

\bibitem{KoZa} W.~Kohnen and D.~Zagier, {\itshape Modular forms with rational periods}. In Modular forms (Durham, 1983), Ellis Horwood Ser. Math. Appl.: Statist. Oper. Res., pages 197--249. Horwood, Chichester, 1984.

\bibitem{t} T.~Tasaka, {\itshape On a conjecture for representations of integeres as sums of squares and double shuffle relations}, preprint.

\end{thebibliography}
\end{document}